\def\thead{submitted}
\numberwithin{equation}{section}
\def\@seccntformat#1{%
  \protect\textup{%
    \protect\@secnumfont
    \expandafter\protect\csname format#1\endcsname 
    \csname the#1\endcsname
    \protect\@secnumpunct
  }%
}
\xpatchcmd{\section}{\scshape}{\bfseries\scshape}{}{}
\xpatchcmd{\subsection}{\bfseries}{\bfseries\scshape}{}{}
\xpatchcmd{\subsubsection}{\itshape}{\scshape}{}{}
\xpatchcmd{\proof}{\itshape}{\bfseries}{}{}
\newcommand{\pgfextractangle}[3]{%
	\pgfmathanglebetweenpoints{\pgfpointanchor{#2}{center}}
		{\pgfpointanchor{#3}{center}}
	\global\let#1\pgfmathresult  
}
\shortauthors\ 
\setlist[enumerate,1]{label={\upshape(\arabic*)}}
\def\print@backrefs#1{
    \space\SentenceSpace\expandafter\ifx\csname br@#1\endcsname\relax%
    \relax\else{\rm~$\langle$\csname br@#1\endcsname$\rangle$}\fi%
}
\newtheorem{thm}{Theorem}[section]
\newtheorem{lem}[thm]{Lemma}
\newtheorem{cor}[thm]{Corollary}
\newtheorem{conj}{Conjecture}
\newtheorem{ques}{Question}
\theoremstyle{definition}
\newtheorem{defn}[thm]{Definition}
\newtheorem{rem}[thm]{Remark}
\renewcommand{\Re}{\mathbb R}
\newcommand{\B}{\mathbf B}
\newcommand{\D}{\mathcal{D}}
\newcommand{\BB}{\mathcal{B}}
\renewcommand{\SS}{\mathcal{S}}
\newcommand{\K}{\mathcal{K}}
\newcommand{\C}{\mathcal{C}}
\DeclareMathOperator{\inter}{int}
\DeclareMathOperator{\bd}{\partial}
\DeclareMathOperator{\conv}{conv}
\DeclareMathOperator{\area}{area}
\DeclareMathOperator{\cl}{cl}
\title[Tiling a circular disc with congruent pieces]
      {Tiling a circular disc with congruent pieces}
\subjclass[2010]{52C20; 52C22}
\keywords{tiling, dissection, monohedral, topological disc, Jordan region}
\author[\'A. Kurusa]{\'Arp\'ad Kurusa}
\thanks{\'A. Kurusa's
  research was supported by NFSR of Hungary (NKFIH) under
  grant numbers K~116451 and KH\_18~129630,
  and by the Ministry of Human Capacities, Hungary grant 20391-3/2018/FEKUSTRAT.}
 \address{\'A. Kurusa, \upshape
  Alfr\'ed R\'enyi Institute of Mathematics, Hungarian Academy of Sciences,
  Re\'altanoda u. 13-15, H-1053 Budapest, Hungary;
  \emph{and}
  Bolyai Institute, University of Szeged,
  Aradi v\'ertan\'uk tere 1, 6725 Szeged, Hungary.
 }
 \email{kurusa@math.u-szeged.hu}
 \urladdr{http://www.math.u-szeged.hu/tagok/kurusa}%
\author[Z. L\'angi]{L\'angi Zsolt}
 \thanks{Z. L\'angi's research is supported by the NFSR of Hungary (NKFIH) under grant number K-119670, the J\'anos Bolyai Research Scholarship of the Hungarian Academy of Sciences, and grants BME FIKP-V\'IZ and the \'UNKP-19-4 New National Excellence Program by the Ministry for Innovation and Technology.}
 \address{Z. L\'angi, \upshape
  MTA-BME Morphodynamics Research Group and Dept. of Geometry,
  Budapest University of Technology, Egry J\'ozsef u. 1., 1111 Budapest, Hungary.
 }
 \email{zlangi@math.bme.hu}
 \urladdr{http://math.bme.hu/~zlangi/}%
\author[V. V\'{\i}gh]{Viktor V\'{\i}gh}
 \thanks{V. V\'{\i}gh's research was supported by NFSR of Hungary (NKFIH) under
  grant number K~116451,
  and by the Ministry of Human Capacities, Hungary grant 20391-3/2018/FEKUSTRAT.}
 \address{V. V\'{\i}gh, \upshape
  Dept. of Geometry, Bolyai Institute, University of Szeged, 
  Aradi v\'ertan\'uk tere 1, H-6720 Szeged, Hungary \emph{and} Dept. of Natural Sciences and Engineering, Faculty of Mechanical Engineering and Automation, John von Neumann University, Izs\'aki \'ut 10, H-6000 Kecskem\'et, Hungary.
 }
 \email{vigvik@math.u-szeged.hu}
 \urladdr{http://www.math.u-szeged.hu/tagok/vigvik/}%
\begin{document}

\begin{abstract}
In this note we prove that
any monohedral tiling of the closed circular unit disc
with $k \leq 3$ topological discs as tiles
has a $k$-fold rotational symmetry.
This result yields the first nontrivial estimate about the minimum number
of tiles in a monohedral tiling of the circular disc
in which not all tiles contain the center,
and the first step towards answering a question of Stein
appearing in the problem book of Croft, Falconer and Guy in 1994.
\end{abstract}

\null\vskip-16mm\null%

\maketitle

\null\vskip-16mm\null%

\section{Introduction}

A \emph{tiling} of a convex body $\mathcal K$ in Euclidean $d$-space $\Re^d$
is a finite family of compact sets in $\Re^d$ with mutually disjoint interiors,
called \emph{tiles}, whose union is $\mathcal K$.
A tiling is \emph{monohedral}, if all tiles are congruent.

In this paper we deal with the monohedral tilings of the closed circular
unit disc $\BB^2$ with center $O$, in which the tiles are Jordan regions;
i.e. are homeomorphic to a closed circular disc.
The easiest way to generate such tilings,
which we call \emph{rotationally generated tilings},
is to rotate around $O$ a simple, continuous curve
connecting $O$ to a point on the boundary $\SS^1$ of~$\BB^2$.
The following question, based on the observation
that any tile of such a monohedral tiling of $\BB^2$ contains $O$,
seems to arise regularly in recreational mathematical circles~\cite{MathOverflow}:

\begin{ques}\label{ques:existence}
 Are there monohedral tilings of $\BB^2$ in which not all of the tiles contain $O$?
\end{ques}

The answer to Question~\ref{ques:existence} is affirmative;
the usual examples to show this are the first two configurations in Figure~\ref{fig:twelve}.
The following harder variant 
is attributed to Stein by Croft, Falconer and Guy in \cite[last paragraph on p.~87]{CroftFalconerGuy1994}.

\begin{ques}[Stein]\label{ques:Stein}
 Are there monohedral tilings of $\BB^2$ in which $O$ is
 in the interior of a tile? 
\end{ques}

A systematic investigation of monohedral tilings of $\BB^2$ was started
in \cite{HaddleyWorsley2016} by Haddley and Worsley.
In their paper they called a monohedral tiling \emph{radially generated},
if every tile is \emph{radially generated}, meaning that
its boundary is a continuous simple curve consisting of three parts:
a circular arc of length $\alpha$ and two other curves one of which
is the rotation of the other one about their common point by angle $\alpha$.
The following ambitious conjecture
appears in \cite[Conjecture 6.1]{HaddleyWorsley2016}.

\begin{conj}[Haddley and Worsley]\label{conj:radiallygenerated}
 Every monohedral tiling is a subtiling of a radially generated tiling.
\end{conj}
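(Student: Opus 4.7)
The plan is to build, from a given monohedral tiling $T=\{T_1,\dots,T_n\}$ of $\BB^2$, a coarser radially generated tiling $R=\{R_1,\dots,R_k\}$ in which each $R_j$ is a union of some tiles of $T$; this is exactly the requirement that $T$ be a subtiling of $R$. The desired rotational structure of $R$ should be extracted from the $1$-skeleton $\Sigma := \bigcup_i \partial T_i$ together with the congruence of the tiles.

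\textbf{Step 1 (The center is on a tile boundary).} I would first prove that $O$ cannot lie in the interior of any tile of $T$. Indeed, if $O\in\inter T_i$ and $R$ is any tiling coarsening $T$, then the unique tile $R_j\supseteq T_i$ would have $O\in\inter R_j$; but by the definition of a radially generated tile, $O$ is the common endpoint of the two rotation-related boundary curves, hence on $\partial R_j$. So the conjecture entails (and requires first establishing) the negative answer to Question~\ref{ques:Stein}.

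\textbf{Step 2 (Central spokes).} Let $\mathcal T_O=\{T_i:O\in T_i\}$, which by Step~1 has at least two elements. For each $T_i\in\mathcal T_O$ the portion of $\partial T_i$ near $O$ consists of arcs emanating from $O$. Tracing each such arc maximally along $\Sigma$, I would produce finitely many pairwise non-crossing simple curves $\gamma_1,\dots,\gamma_p\subset\Sigma$ from $O$ to $\SS^1$, and verify (using the Jordan-region hypothesis and the fact that $O$ is a vertex of $\Sigma$) that no such trace can terminate in $\inter\BB^2$.

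\textbf{Step 3 (Rotational selection).} The heart of the argument is to extract a sub-family $\gamma_{j_1},\dots,\gamma_{j_k}$, cyclically ordered about $O$, such that rotation about $O$ through $2\pi/k$ cyclically permutes them. My envisaged approach has two parts: (i) a counting argument using that the congruent tiles partition $\SS^1$ into boundary arcs of equal total length, yielding rational angles between the $\gamma_j$; and (ii) a rigidity argument showing that the isometry matching two tiles of $\mathcal T_O$ that share $O$ must be a rotation centered at $O$, so that the images of a chosen spoke under these isometries are themselves spokes in $\Sigma$.

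\textbf{Step 4 (Assembly).} Define $R$ as the tiling of $\BB^2$ into the $k$ closed regions cut off by $\gamma_{j_1},\dots,\gamma_{j_k}$. By Step~3 each region has boundary made of one arc of $\SS^1$ and two curves from $O$ that are rotations of each other by $2\pi/k$, so every $R_j$ is a radially generated tile. Because each $\gamma_{j_\ell}$ lies inside $\Sigma$, no tile of $T$ crosses any $\gamma_{j_\ell}$, hence every $R_j$ is a union of tiles of $T$, i.e.~$T$ is a subtiling of $R$.

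\textbf{Main obstacle.} The decisive difficulty is Step~3: distilling a global $k$-fold rotational symmetry about $O$ from nothing but local congruences of Jordan tiles. A priori the spokes $\gamma_j$ may be shaped quite differently and meet $\SS^1$ at seemingly unrelated points, and the congruences between tiles are isometries of the plane that need not fix $O$, so forcing these isometries into a single cyclic rotation group is a genuinely new rigidity statement. This is precisely why Conjecture~\ref{conj:radiallygenerated} has resisted a general proof, and why the main theorem of the present paper confines itself to the regime $k\le 3$, where the symmetry can be read off directly without going through Step~3.
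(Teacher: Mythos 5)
You are attempting to prove Conjecture~\ref{conj:radiallygenerated}, which the paper does not prove: it is an open conjecture of Haddley and Worsley, quoted here only as motivation. The paper's actual contribution, Theorem~\ref{thm:main}, settles only the case of at most three tiles, and does so by a completely different route (circumcircle rigidity, the boundary-arc Lemmas, and equidecomposability of the tile boundaries), not by extracting spokes from the $1$-skeleton. So there is no proof in the paper to compare yours against, and your text is in any case a strategy outline rather than a proof: you concede in your final paragraph that Step~3 --- the only step with real content --- is unresolved. That concession is the fatal gap; everything in Steps~2 and~4 is conditional on it.

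Beyond the admitted gap, Step~1 rests on a misreading of the definition. A radially generated tile is bounded by a circular arc of length $\alpha$ and two curves, one of which is the rotation of the other \emph{about their common point} by angle $\alpha$; nothing requires that common point to be $O$. In the second and fourth tilings of Figure~\ref{fig:twelve} the apex of each tile lies on $\SS^1$, not at $O$. Hence the conjecture does not obviously entail a negative answer to Question~\ref{ques:Stein} (which remains open in general, and is resolved in the paper only for $k\le 3$ as a \emph{consequence} of Theorem~\ref{thm:main}, not as a preliminary step), and your reduction collapses at the outset. Relatedly, Steps~2--4 can only ever produce a coarsening by spokes emanating from $O$ and permuted by a rotation about $O$, i.e., a rotationally generated tiling; this is a strictly smaller class than the radially generated ones (the rightmost tiling in Figure~\ref{fig:twelve} is radially generated but not rotationally invariant), so your plan targets a statement strictly stronger than the conjecture, for which the paper offers no evidence. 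If you want a provable statement along these lines, the correct scope is the paper's: for $k\le 3$ tiles the tiling is rotationally generated, proved by classifying the possible boundary arcs $\SS_i=\D_i\cap\SS^1$ and the isometries matching them.
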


A similar problem was investigated in~\cite{Goncharov} by Goncharov,
who, for any $O$-symmetric convex body in $\Re^d$,
determined the smallest number of congruent copies
of a subset of the body that cover the body.
In the spirit of this approach we raise the following
variant of Question~\ref{ques:existence}:

\begin{ques}\label{q:minima}
 What is the minimum cardinality $n(\BB^2)$ of a monohedral tiling of $\BB^2$
 in which not all of the tiles contain $O$?
\end{ques}

As the configurations in Figure~\ref{fig:twelve}
show, we have $n(\BB^2) \leq 12$.
On the other hand, the lower bound $n(\BB^2) \geq 3$ is also relatively easy to prove:
it was posed as a problem in 2000
on the Russian Mathematical Olympiads \cite{MoscowMOs2000-5}.
Presently, to the authors' knowledge,
the best bounds on $n(\BB^2)$ are still the trivial ones: $3 \leq n(\BB^2) \leq 12$.

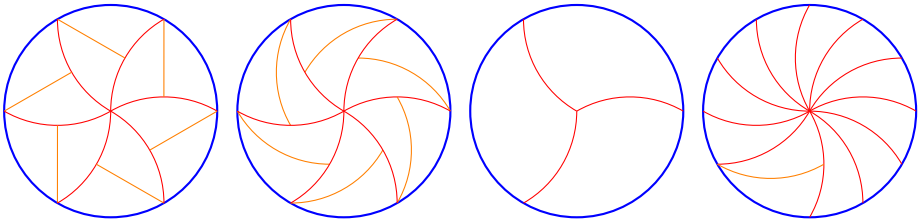
\begin{figure}[H]\centering\vskip-.5\baselineskip
 \begin{tikzpicture}[x=14mm,y=14mm]
  \coordinate (O) at (0,0);
  \def\tempr{1}
  \draw[blue,thick] (O) circle (\tempr);
  \foreach\x in {0,...,5}	{
   \draw[red, name path=curve] (60*\x:{\tempr}) arc (60*(\x+1):60*(\x+2):1);
   \path[name path=line] (60*\x+60:{\tempr}) -- (60*\x:{\tempr/2});
   \draw [orange,name intersections={of=curve and line}] (60*\x+60:{\tempr}) -- (intersection-1);
  }
 \end{tikzpicture}
 \ 
 \begin{tikzpicture}[x=14mm,y=14mm]
  \coordinate (O) at (0,0);
  \def\tempr{1}
  \draw[blue,thick] (O) circle (\tempr);
  \foreach\x in {0,...,5}	{
   \coordinate (P) at ({\tempr*cos(60*\x)},{\tempr*sin(60*\x)});
   \draw[orange,thin,rotate around={{60*\x+30}:(P)}] (P) arc (0:60:{\tempr});
   \draw[red,thin,rotate around={{60*\x+60}:(P)}] (P) arc (0:60:{\tempr});
  }
 \end{tikzpicture}
 \ 
 \begin{tikzpicture}[x=14mm,y=14mm]
  \coordinate (O) at (0,0);
  \def\tempr{1}
  \draw[blue,thick] (O) circle (\tempr);
  \foreach\x in {0,1,2}	{
   \coordinate (P) at ({\tempr*cos(120*\x)},{\tempr*sin(120*\x)});
   \draw[red,thin,rotate around={{120*\x+60}:(P)}] (P) arc (0:60:{\tempr});
  }
 \end{tikzpicture}
 \ 
 \begin{tikzpicture}[x=14mm,y=14mm]
  \coordinate (O) at (0,0);
  \def\tempr{1}
  \draw[blue,thick] (O) circle (\tempr);
  \foreach\x in {0,1,2,3,4,5,6,7,9,10,11}	{
   \coordinate (P) at ({\tempr*cos(30*\x)},{\tempr*sin(30*\x)});
   \draw[red,thin,rotate around={{30*\x+60}:(P)}] (P) arc (0:60:{\tempr});
  }
  \foreach\x in {7}	{
   \coordinate (P) at ({\tempr*cos(30*\x)},{\tempr*sin(30*\x)});
   \draw[orange,thin,rotate around={{30*\x+30}:(P)}] (P) arc (0:60:{\tempr});
  }
 \end{tikzpicture}
 \vskip-0.5\baselineskip
 \caption{ A non-radially\protect\footnotemark,
  a radially, and a rotationally generated monohedral tiling of $\BB^2$.
  In contrast to these three,
  the rightmost, radially generated monohedral tiling is not rotationally invariant.
 }\label{fig:twelve}\vskip-1.75\baselineskip
\end{figure}\noindent%
\footnotetext{
It may be worth noting that this configuration also appears regularly in various places: this was chosen, for example, as the logo of the MASS program at Penn State University, it appears on the front page of five issues of the Hungarian problem-solving mathematical journal K\"oz\'episkolai Matematikai Lapok \cite{KoMaL}, and it can be found also in the book \cite[Figure C8]{CroftFalconerGuy1994}.}%

Our main result is the following.

\begin{thm}\label{thm:main}
 Any monohedral tiling of $\BB^2$ with at most three topological discs
 is rotationally generated. 
\end{thm}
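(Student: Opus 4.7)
The plan is to treat the cases $k = 1, 2, 3$ separately. The case $k = 1$ is trivial: the single tile is $\BB^2$, generated by rotating any radius by angle $0$.

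For $k = 2$, let $\gamma = \bd T_1 \cap \bd T_2$. I would first apply Euler's formula to the planar graph of the tiling: since each tile is a topological disc, any interior vertex of the graph would force one of the two tiles to occupy two sectors meeting at that vertex, contradicting the disc property. Hence the graph has no interior vertices, and $\gamma$ is a single simple arc with both endpoints on $\SS^1$. Equal perimeters then force each tile to carry a $\SS^1$-arc of length $\pi$, so the endpoints of $\gamma$ are antipodal. Next, the congruence $\varphi\colon T_1 \to T_2$ maps $\SS^1 \cap \bd T_1$ onto a unit-radius arc of $\bd T_2$; since two distinct unit circles meet in at most two points, this image must lie on $\SS^1$ itself, so $\varphi$ preserves $\SS^1$ as a set and fixes $O$. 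Being the unique isometry that carries one half-circle onto the other, $\varphi$ is the $\pi$-rotation about $O$, so $\gamma$ is $\pi$-invariant; a parametrization argument (the induced reversal of $\gamma$ must have a fixed point) then places $O$ on $\gamma$, and splitting $\gamma$ at $O$ gives the generating curve.

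For $k = 3$, I would start with a combinatorial analysis. Let $V_i$, $V_b$ denote the numbers of interior and boundary vertices, $n_i$ the number of interior arcs, and $m_v$ the number of interior arcs meeting $\SS^1$ at a boundary vertex $v$. The disc property forces every interior vertex to have degree $3$, with all three tiles meeting there, and $m_v \le 2$ at every boundary vertex. Euler's formula yields $n_i = V_i + 2$ and $\sum_v m_v = 4 - V_i$. Combined with the fact (forced by congruence, assuming for now that no interior arc is a unit-radius arc) that each tile carries a $\SS^1$-arc total of length $2\pi/3$, a short case check on the finite list of admissible $(V_i, V_b, \{m_v\})$ rules out every configuration except $V_i = 1$, $V_b = 3$, $m_v \equiv 1$: one interior triple point, three interior arcs, and three $\SS^1$-arcs of length $2\pi / 3$.

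Finally I would identify the interior triple point $p$ with $O$. Each congruence $\varphi_{ij}\colon T_i \to T_j$ sends a $2\pi/3$-arc of $\SS^1$ onto another, so, as in the $k=2$ argument, it preserves $\SS^1$ and fixes $O$; since the tiling has only one interior vertex, $\varphi_{ij}$ also fixes $p$. If $p \ne O$, every $\varphi_{ij}$ would have to be the reflection in the line $Op$; then $\varphi_{12} = \varphi_{13}$ as maps, forcing $T_2 = T_3$, which is absurd. Hence $p = O$, the tiling is invariant under the $2\pi/3$-rotation about $O$, and rotating any one of the interior arcs through multiples of $2\pi/3$ generates it. The main obstacle will be the combinatorial case analysis for $k = 3$: in particular, the subcase $V_i = 0$, $V_b = 3$ with a ``double'' boundary vertex ($m_v = 2$) requires a dedicated argument, and throughout one must handle the possibility that an interior arc is itself a unit-radius arc, which could allow a congruence to interchange $\SS^1$-arcs with interior arcs and would break the clean ``isometries preserve $\SS^1$'' step.
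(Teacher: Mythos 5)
Your topological skeleton (the Euler-formula analysis giving a single interior triple point and three cut-curves, and the identification of the triple point with $O$ once every congruence is known to preserve $\SS^1$) matches what the paper establishes in its structural lemmas, and your $k\le 2$ outline and the ``all three boundary arcs have length $2\pi/3$'' case are essentially sound. But the argument has a genuine gap exactly where you flag ``the main obstacle'': you assume, and never prove, that each tile meets $\SS^1$ in an arc of length $2\pi/3$ --- equivalently, that no congruence between tiles carries part of a tile's $\SS^1$-arc onto a unit circular arc hidden inside the interior cut-curves. This is not a technical loose end but the core of the theorem. The paper spends most of its effort precisely on the configuration where two of the arcs $\SS_i$ have length $\alpha$, the third has length $\beta<\alpha$, and the deficit $\alpha-\beta$ is absorbed by unit circular arcs inside the cut-curves: it develops an equidecomposability calculus for ``multicurves'', does a bookkeeping of convex versus concave unit circular arcs yielding total lengths $x=(\alpha-\beta)/3$ and $y=2(\alpha-\beta)/3$, and then counts \emph{maximal} unit arcs of lengths $\alpha$ and $\beta$ to reach a final contradiction. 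Without an argument of this kind your ``short case check'' does not close.

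Two smaller but real problems. First, ``equal perimeters'' is unavailable: the tiles are arbitrary topological discs and the paper explicitly allows non-rectifiable (Koch-snowflake type) boundaries, so you cannot subtract the length of $\gamma$ from anything; for $k=2$ you should instead note that $\SS_1,\SS_2$ are complementary arcs, hence one has length at least $\pi$, so the corresponding tile contains antipodal points and $\BB^2$ is the circumdisc of every tile. Second, the step ``the image of $\SS^1\cap\bd T_1$ must lie on $\SS^1$ since two distinct unit circles meet in at most two points'' begs the question: the image is a unit circular arc contained in $\bd T_2$, and nothing yet prevents it from lying on a \emph{different} unit circle entirely inside $\BB^2$ as part of $\gamma$ --- the same unresolved issue in another guise. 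Relatedly, for $k=3$ the connectedness of each $\SS_i$ (needed before your vertex count is meaningful) does not follow from the two-tile topological lemma; the paper proves it separately via a spindle argument that ultimately plays off the algebraicity of $\sin(2\pi/k)$ against the transcendence of $\pi$.
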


This result
implies Conjecture~\ref{conj:radiallygenerated} for tilings with at most $3$ tiles,
yields the first nontrivial lower bound for $n(\BB^2)$,
and in particular
proves that the answer for Question~\ref{ques:Stein}
is refuting for tilings with at most three tiles.
    
\goodbreak
\begin{cor}
 We have $n(\BB^2) \geq 4$.
\end{cor}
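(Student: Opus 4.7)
The plan is to derive the corollary directly from Theorem~\ref{thm:main} together with the observation, already highlighted in the introduction, that in every rotationally generated tiling the center $O$ is a common boundary point of all tiles.

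The argument I would give has three short steps. First, I would handle the degenerate case $k=1$: the only tiling of $\BB^2$ by a single tile is $\BB^2$ itself, and this tile trivially contains $O$. Second, for $k\in\{2,3\}$, I would invoke Theorem~\ref{thm:main} to conclude that the tiling is rotationally generated, i.e.\ obtained by rotating a simple continuous curve $\gamma$ joining $O$ to a point of $\SS^1$ by the angles $2\pi j/k$, $j=0,\dots,k-1$. Third, I would read off from this description that each tile contains the point $O$: the boundary of every tile consists of two rotated copies of $\gamma$ together with an arc of $\SS^1$, and both copies of $\gamma$ start at $O$, so $O$ lies on (and hence in) every tile.

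Combining the three cases, I would conclude that in every monohedral tiling of $\BB^2$ with at most three topological disc tiles, each tile contains $O$. Contrapositively, any monohedral tiling in which not all tiles contain $O$ must use at least four tiles, i.e.\ $n(\BB^2)\geq 4$.

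There is essentially no obstacle here, since Theorem~\ref{thm:main} does all the work; the only point worth being careful about is that the definition of \emph{rotationally generated} given in the introduction makes $O$ an endpoint of the generating curve, which is exactly what forces $O$ to lie in every tile. If one wished to emphasise self-containment, one could also remark that $k=1$ need not be treated separately once one agrees that a single-tile tiling is rotationally generated with $\gamma$ any radius of $\BB^2$; but keeping the trivial case $k=1$ separate makes the deduction shorter and fully transparent.
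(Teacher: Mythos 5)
Your proposal is correct and follows exactly the route the paper intends: the corollary is an immediate consequence of Theorem~\ref{thm:main} combined with the observation from the introduction that every tile of a rotationally generated tiling contains $O$, so that by the definition of $n(\BB^2)$ at least four tiles are needed. Your separate treatment of $k=1$ is harmless but unnecessary, since Theorem~\ref{thm:main} already covers all tilings with at most three tiles.
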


\begin{cor}
 There is no monohedral tiling of $\BB^2$ with at most three topological discs as tiles
 such that the center of $\BB^2$ is contained in exactly one of them.
\end{cor}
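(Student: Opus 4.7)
The plan is to deduce the corollary directly from Theorem~\ref{thm:main}, so very little needs to be added to what has been proved before. I would argue by contradiction: assume there exists a monohedral tiling of $\BB^2$ by $k \leq 3$ topological discs in which the centre $O$ is contained in exactly one tile. By Theorem~\ref{thm:main} this tiling is rotationally generated, so it consists of the $k$ consecutive rotates of a simple continuous curve $\gamma$ joining $O$ to a point of $\SS^1$ through the angle $2\pi/k$, together with the $k$ circular arcs on $\SS^1$ determined by the endpoints of those rotates.

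The next step is simply to read off the structural consequence that $O$ is a common vertex of every tile. Indeed, each tile in a rotationally generated tiling is bounded by two consecutive rotated copies of $\gamma$ and by one of the arcs of $\SS^1$; since every rotated copy of $\gamma$ has $O$ as one endpoint, the point $O$ lies on the boundary of \emph{each} of the $k$ tiles. In particular $O$ belongs to all the tiles, contradicting the assumption that it belongs to exactly one of them.

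Since Theorem~\ref{thm:main} does all of the substantive work, no real obstacle arises at this stage. The only thing one has to verify is the elementary structural observation that $O$ is a common vertex of every tile of a rotationally generated tiling, and this is immediate from the definition recalled in the introduction. The same argument simultaneously yields the previous corollary $n(\BB^2)\geq 4$, by noting that $O$ lies on the boundary (and hence inside) each tile.
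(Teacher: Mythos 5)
Your proposal is correct and follows exactly the route the paper intends: the corollary is stated without proof as an immediate consequence of Theorem~\ref{thm:main} together with the observation (made in the introduction) that every tile of a rotationally generated tiling contains $O$, which is precisely the structural fact you verify.
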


In Section~\ref{sec:prelim} we introduce the notions used in the paper,
investigate the basic properties of monohedral tilings of $\BB^2$,
and prove a series of lemmas that we use in the proof of Theorem~\ref{thm:main}.
In Section~\ref{sec:proof} we prove Theorem~\ref{thm:main}.

Finally, in Section~\ref{sec:remarks}
we collect our additional remarks and propose some open problems.

\section{Notations and preliminaries}\label{sec:prelim}

Throughout the proof, we denote by $\BB^2$
the closed unit circular disc with the origin $O=(0,0)$ as its center,
and its boundary by $\SS^1=\bd\BB^2$.
We say that two points $P,Q \in \SS^1$ are \emph{antipodal} if $d(P,Q) = 2$, where
$d(\cdot, \cdot)$ denotes Euclidean distance.
For points $P,Q \in \Re^2$, the closed segment with endpoints $P,Q$ is denoted by $\overline{PQ}$.

For any $P,Q \in \Re^2$ with $d(P,Q) \leq 2r$,
the \emph{$r$-spindle} $\ominus^r_{P,Q}$ of two points $P,Q$
is by definition (cf.\ \cite{BLNP} or \cite{FKV})
the intersection of all Euclidean discs of radius $r>0$
that contain~$P$ and~$Q$. In other words, $\ominus^r_{P,Q}$ is
the region bounded by the two circular arcs of radius $r>0$
that connect $P$ and $Q$ and are not longer than a half-circle.

A set homeomorphic to $\BB^2$ is called a \emph{topological disc}.
The boundary of a topological disc is a simple, closed, continuous curve,
called \emph{Jordan curve}.
On the other hand, the Jordan--Schoenflies theorem \cite{Schoenflies} yields
that every Jordan curve is the boundary of a topological disc.
We remark that since all topological discs are compact,
they are Lebesgue measurable; we denote their measure by $\area(\cdot)$.
Nevertheless, there are topological discs (see, e.g. the Koch snowflake,
or for more examples \cite{Sagan}) whose boundary is not rectifiable.
Our next lemma, which we use in the proof,
holds for these topological discs as well.

\begin{lem}\label{lem:Jordan}
Let $\Gamma$ be a Jordan curve and $\mathcal C$ be a simple, continuous curve.
Then $\Gamma$ contains finitely many congruent copies of $\mathcal C$
which are mutually disjoint, apart from possibly their endpoints.
\end{lem}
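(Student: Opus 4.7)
I would argue by contradiction: suppose $\Gamma$ contained an infinite family $\{\mathcal{C}_i\}_{i\ge 1}$ of pairwise congruent copies of $\mathcal{C}$ whose interiors are pairwise disjoint (so that distinct $\mathcal{C}_i$ share at most their endpoints). Since $\Gamma$ need not be rectifiable, one cannot reach a contradiction by comparing arc lengths on $\Gamma$ directly; the plan is instead to transfer the combinatorics to an auxiliary parameter circle and exploit uniform continuity there.

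The starting point is a homeomorphism $\gamma\colon S^1 \to \Gamma$, which exists because $\Gamma$ is a Jordan curve. Each $\mathcal{C}_i$ is a simple continuous arc, hence a compact and connected subset of $\Gamma$, so $A_i := \gamma^{-1}(\mathcal{C}_i)$ is a compact connected subset of $S^1$. Every such set is a single point, a closed sub-arc of $S^1$, or all of $S^1$; the last option is excluded because $\mathcal{C}_i \cong [0,1]$ is not homeomorphic to the circle, and the first is excluded because $\diam(\mathcal{C}_i) = \diam(\mathcal{C}) > 0$ (a simple arc being non-degenerate). Hence each $A_i$ is an honest closed sub-arc of $S^1$, and the interior-disjointness of the $\mathcal{C}_i$ transfers through $\gamma$ to the $A_i$.

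Summing arc lengths on the parameter circle then yields $\sum_{i\ge 1} |A_i| \le 2\pi$, so $|A_i| \to 0$. Uniform continuity of $\gamma$ on the compact circle $S^1$ now forces $\diam(\mathcal{C}_i) = \diam(\gamma(A_i)) \to 0$, contradicting the constant value $\diam(\mathcal{C}_i) = \diam(\mathcal{C}) > 0$ for every $i$.

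The only slightly delicate step is the identification of each $\mathcal{C}_i$ with a genuine sub-arc of $\Gamma$, which rests on the elementary classification of compact connected subsets of $S^1$ together with the observation that $\mathcal{C}_i$ cannot equal the full circle $\Gamma$. Everything else is routine compactness on $S^1$, and crucially the argument never appeals to arc length on $\Gamma$ itself, in line with the paper's remark that the lemma must hold even when $\Gamma$ is a non-rectifiable Jordan curve such as the Koch snowflake.
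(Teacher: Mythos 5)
Your argument is correct and is essentially the paper's own proof: both reduce to the fact that the preimages of the copies under a homeomorphism $S^1\to\Gamma$ are non-overlapping arcs whose lengths must sum to at most $2\pi$, hence shrink, contradicting (via uniform continuity) the fixed positive size of a congruent copy of $\mathcal C$. Your write-up is somewhat more explicit than the paper's (which phrases the contradiction in terms of the limits of the endpoints rather than diameters), but the underlying mechanism is the same.
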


\begin{proof}
Assume for contradiction that $\Gamma$ contains infinitely many congruent
copies $\mathcal C_n$ ($n=1,2,\ldots$) of $\mathcal C$ which are mutually disjoint, apart from possibly their endpoints.
Let $P_n$ and $Q_n$ denote the endpoints of $\mathcal C_n$.
Since $\Gamma$ is compact, we may assume that $\lim_{n \to \infty} P_n = P$ and $\lim_{n \to \infty} Q_n =Q$ for some $P,Q \in \Gamma$.
By the properties of congruence, $P \neq Q$. On the other hand, since $\Gamma$ is homeomorphic to $\SS^1$, the congruent copies of $\mathcal C$
correspond to mutually nonoverlapping circular arcs on $\SS^1$. Clearly, this implies that $P=Q$, a contradiction.
\end{proof}

\begin{lem}\label{lem:topolofdissection}
 Let $\D = \D_1 \cup \D_2$,
 where $\D, \D_1$ and $\D_2$ are topological discs,
 and $\inter \D_1 \cap \inter \D_2 = \emptyset$.
 Then $\SS_1 = \D_1 \cap \bd \D$, $\SS_2 = \D_2 \cap \bd \D$ and $\SS=\bd \D_1 \cap \bd \D_2$
 are simple continuous curves.
\end{lem}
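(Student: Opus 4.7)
The plan is to reduce the claim to showing that $\SS=\bd\D_1\cap\bd\D_2$ is a simple arc whose two endpoints lie on $\bd\D$; these endpoints then partition the Jordan curve $\bd\D$ into the two closed arcs $\SS_1$ and $\SS_2$. First I would derive the elementary set-theoretic identities implied by $\inter\D_1\cap\inter\D_2=\emptyset$: a point $p\in\inter\D_1\cap\D_2$ would admit a Euclidean neighborhood in $\inter\D_1$ which, since $\D_2=\overline{\inter\D_2}$, would also meet $\inter\D_2$, contradicting disjointness. Hence $\D_1\cap\D_2=\bd\D_1\cap\bd\D_2=\SS$, and combined with $\bd(\D_1\cup\D_2)\subseteq\bd\D_1\cup\bd\D_2$ and $\inter\D_i\subseteq\inter\D$ this yields $\bd\D=\SS_1\cup\SS_2$ and $\bd\D_i=\SS_i\cup\SS$. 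I would also prove the key inclusion $\bd\D_1\cap\inter\D\subseteq\bd\D_2$: if $q\in\bd\D_1\cap\inter\D$ lay outside $\bd\D_2$, then $q\notin\D_2$ (the alternative $q\in\inter\D_2$ contradicts the density of $\inter\D_1$ at $q\in\bd\D_1$), so $q$ would have a Euclidean neighborhood inside $\inter\D\setminus\D_2\subseteq\D_1$ which would then lie in $\inter\D_1$, contradicting $q\in\bd\D_1$.

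The crux is the connectedness of $\SS$. For this I would invoke the Mayer--Vietoris sequence in \v{C}ech cohomology for the closed cover $\D=\D_1\cup\D_2$, valid here since all the spaces involved are compact ANRs. Using the contractibility of $\D$, $\D_1$, and $\D_2$, the relevant portion of the sequence collapses to $\check{H}^0(\D)\to\check{H}^0(\D_1)\oplus\check{H}^0(\D_2)\to\check{H}^0(\SS)\to\check{H}^1(\D)=0$, whose initial map is the diagonal; therefore $\check{H}^0(\SS)\cong\Z$ and $\SS$ is connected. As a closed connected subset of the Jordan curve $\bd\D_1\cong\SS^1$, $\SS$ is then either a single point, a proper arc, or all of $\bd\D_1$. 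The case $\SS=\bd\D_1$ forces $\bd\D_1=\bd\D_2$ (a Jordan curve contained in another must equal it), hence $\D_1=\D_2$, contradicting disjoint nonempty interiors; the case $\SS=\{p\}$ makes the open-in-$\bd\D_1$ set $\bd\D_1\cap\inter\D\subseteq\SS$ empty, so $\bd\D_1\subseteq\bd\D$ and hence $\bd\D_1=\bd\D$, the same contradiction.

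Therefore $\SS$ is a proper arc in $\bd\D_1$, and its two endpoints lie on $\bd\D$: an endpoint $p\in\inter\D$ would belong to the open-in-$\bd\D_1$ set $\bd\D_1\cap\inter\D\subseteq\SS$ and so extend $\SS$ past $p$. Any further point $c\in\SS\cap\bd\D$ in the relative interior of $\SS$ would force, by a local analysis at $c$, the Jordan curve $\bd\D_1$ either to coincide with $\bd\D$ locally (leaving no room for $\D_2$ nearby) or to touch $\bd\D$ tangentially (creating a pinch point on the boundary of $\D_1$ or $\D_2$); either possibility contradicts the hypothesis that $\D_1$ and $\D_2$ are topological discs. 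Hence $\SS\cap\bd\D=\{a,b\}$ consists exactly of the two endpoints of $\SS$; these two points partition the Jordan curve $\bd\D$ into two closed arcs, which, after ruling out the degenerate case that one of them equals just $\{a,b\}$ (again using that the complementary disc must be a topological disc), coincide with $\SS_1$ and $\SS_2$. Consequently $\SS$, $\SS_1$, and $\SS_2$ are all simple continuous curves.

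The principal obstacle is the connectedness of $\SS$: applying Mayer--Vietoris to a closed cover requires moving to \v{C}ech cohomology or verifying an ANR-style condition, and the local analysis near points of $\SS\cap\bd\D$ is elementary but requires a careful case distinction.
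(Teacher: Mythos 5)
Your route is genuinely different from the paper's and, with one repair, it works. The paper first proves that $\SS_1$ and $\SS_2$ are connected, by an elementary crossing argument: if $X_1,Y_1\in\SS_1$ were separated on $\bd\D$ by points $X_2,Y_2\in\SS_2$, then curves joining $X_i$ to $Y_i$ through $\inter\D_i$ would have to intersect, contradicting $\inter\D_1\cap\inter\D_2=\emptyset$. This makes $\SS_1,\SS_2$ closed arcs with common endpoints $P,Q$, and $\SS$ is then identified as the curve joining $P$ to $Q$ that is simultaneously the closure of $\bd\D_1\setminus\SS_1$ and of $\bd\D_2\setminus\SS_2$. You go the other way: you first get connectedness of $\SS=\D_1\cap\D_2$ from Mayer--Vietoris for the closed cover (equivalently, unicoherence of the disc) and then recover $\SS_1,\SS_2$. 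Both are legitimate; the paper's argument uses only the Jordan curve theorem, while yours imports \v{C}ech cohomology but obtains the connectedness of the intersection in one stroke. Your preliminary set-theoretic identities, the exclusion of the cases $\SS=\bd\D_1$ and $\SS=\{p\}$, and the argument that the endpoints $a,b$ of $\SS$ lie on $\bd\D$ are all correct.

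The one step that is not a proof as written is the exclusion of points $c\in\relint\SS\cap\bd\D$ by \emph{local analysis}: the dichotomy \textqq{coincides with $\bd\D$ locally / touches tangentially, creating a pinch point} is not well defined for arbitrary Jordan curves, and the claim that a pinch point is incompatible with being a topological disc would itself need an argument. Fortunately no local analysis is needed. You have already shown $\bd\D_1\setminus\bd\D\subseteq\SS$, so the complementary closed arc $\SS'$ of $\SS$ in the circle $\bd\D_1$ satisfies $\SS'\subseteq\bd\D$; being an arc with endpoints $a,b$ contained in the Jordan curve $\bd\D$, it must be one of the two closed arcs of $\bd\D$ determined by $a$ and $b$. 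Likewise the complementary arc $\SS''$ of $\SS$ in $\bd\D_2$ is one of these two arcs, and $\SS''\neq\SS'$, since otherwise $\bd\D_1=\SS\cup\SS'=\SS\cup\SS''=\bd\D_2$ and hence $\D_1=\D_2$. Therefore $\bd\D=\SS'\cup\SS''$ with $\SS'\cap\SS''=\{a,b\}$. Any $c\in\relint\SS\cap\bd\D$ would then lie in $\relint\SS'$ or in $\relint\SS''$, which is impossible because $\relint\SS$ is disjoint from the complementary arc in each of the circles $\bd\D_1$ and $\bd\D_2$. This gives $\SS\cap\bd\D=\{a,b\}$ and, since $\SS_i=\bd\D\cap\bd\D_i$, immediately $\SS_1=\SS'$ and $\SS_2=\SS''$, which also disposes of the degenerate case you mention at the end.
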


\goodbreak
\begin{proof}
As $\mathcal D$ is a topological disc, we have a homeomorphism $\chi$
such that $\chi(\mathcal D)=\BB^2$.
Since the statement of the lemma is topologically invariant,
it is sufficient to prove it in the case $\D = \B^2$.
Thus, we may assume that $\SS_i=\SS^1\cap\mathcal D_i$ for $i=1,2$,
where we observe that since $\mathcal D_i$ and $\SS^1$ are closed, so is $\SS_i$.

First, we show that $\SS_1$ and $\SS_2$ are connected.
Assume, for example, that some $X_1, Y_1 \in \SS_1$
cannot be connected by an arc in $\SS_1$.
Then there are some points $X_2, Y_2 \notin \SS_1$
that separate $X_1$ and $Y_1$ in $\SS^1$.
Clearly, we have $X_2, Y_2 \in \SS_2$.
For any $i=1,2$, since $\mathcal D_i$ is a topological disc,
there is a simple, continuous curve $\gamma_i$ with endpoints $X_i, Y_i$ such that
apart from these points $\gamma_i$ is contained in $\inter \SS_i$. 
By continuity, $\gamma_1 \cap \gamma_2 \neq \emptyset$,
implying that $\inter \D_1 \cap \inter \D_2 \neq \emptyset$, a contradiction.
Thus, $\SS_1$ and $\SS_2$ are connected,
which yields that they are closed circular arcs in $\SS^1$.
Let the (common) endpoints of these arcs be $P$ and $Q$.

The points $P,Q\in\SS_1\cap\SS_2$ are also in
$\bd\mathcal D_1\cap\bd\mathcal D_2$, hence they are connected by
a simple continuous curve in $\bd\mathcal D_1\setminus\SS_1$
and also in $\bd\mathcal D_2\setminus\SS_2$.
These curves coincide because $\mathcal D=\mathcal D_1\cup\mathcal D_2$,
hence it is $\SS$, and the proof of Lemma~\ref{lem:topolofdissection} is complete.
\end{proof}

\begin{lem}\label{lem:3tilestopology}
Let $\{ \D_1, \D_2, \D_3 \}$ be a tiling of the topological disc $\D$
where for $i=1,2,3$, $\D_i$ is a topological disc such that $\SS_i=\D_i \cap \bd \D$
is a nondegenerate simple continuous curve.
Then $\D_1 \cap \D_2 \cap \D_3$ is a singleton $\{ M \}$,
and for any $i \neq j$, $\D_i \cap \D_j$ is
a simple continuous curve connecting $M$ and a point in $\bd \mathcal D$.
\end{lem}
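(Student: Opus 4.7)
The plan is to reduce to the case $\D = \B^2$ (the statement is topologically invariant) and then apply Lemma~\ref{lem:topolofdissection} twice through the intermediate \emph{complement} sets $E_i := \overline{\D \setminus \D_i}$. First I would show that each $E_i$ is a topological disc: since $\bd \D_i$ is a Jordan curve containing the closed arc $\SS_i$, the complementary arc $\gamma_i := \overline{\bd \D_i \setminus \SS_i}$ is a simple arc joining the two endpoints of $\SS_i$ through $\inter \D$; pasting $\gamma_i$ with $\SS^1 \setminus \inter_{\SS^1}\SS_i$ produces a Jordan curve whose bounded side has closure $E_i$, so Jordan--Schoenflies applies. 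A local dimension argument---any boundary neighbourhood in the topological disc $\D_j$ is itself a topological half-disc, which cannot embed in the $1$-dimensional curve $\bd \D_i$---yields $\D_j \cap \inter \D_i = \emptyset$, and together with the tiling hypothesis this gives $E_i = \D_j \cup \D_k$ with disjoint interiors, where $\{i,j,k\} = \{1,2,3\}$.

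Applying Lemma~\ref{lem:topolofdissection} to $\D = \D_i \cup E_i$ confirms that $\gamma_i = \bd \D_i \cap \bd E_i$ is a simple continuous curve, and applying it to $E_i = \D_j \cup \D_k$ yields both that $\bd \D_j \cap \bd \D_k$ is a simple continuous curve and that $\D_j \cap \bd E_i = \SS_j \cup (\gamma_i \cap \D_j)$ is a simple continuous curve. Since $\gamma_i \cap \SS^1$ is exactly the pair of endpoints of $\SS_i$, the set $\SS_j \cap (\gamma_i \cap \D_j)$ reduces to the shared endpoint $P_{ij}$ of $\SS_i$ and $\SS_j$, and the simple-arc conclusion forces $\gamma_i \cap \D_j$ to be a single subarc of $\gamma_i$ attached to $\SS_j$ at $P_{ij}$; symmetrically $\gamma_i \cap \D_k$ is a single subarc from $P_{ik}$. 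Hence $\gamma_i$ is split by a unique transition point $M_i \in \bd \D_1 \cap \bd \D_2 \cap \bd \D_3$ into these two subarcs.

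To collapse the three transition points, observe that $M_i \in \bd \D_j \cap \inter \D$ and $\bd \D_j = \SS_j \cup \gamma_j$ with $M_i \notin \SS_j \subset \SS^1$, so $M_i \in \gamma_j$; by the analogous splitting of $\gamma_j$ we have $M_i \in \gamma_j \cap \D_i \cap \D_k = \{M_j\}$, whence $M := M_1 = M_2 = M_3$. Inside $\inter \D$ the triple intersection $\D_1 \cap \D_2 \cap \D_3 \subset \gamma_1 \cap \gamma_2 \cap \gamma_3$ then reduces to $\{M\}$, and $\D_i \cap \D_j = \bd \D_i \cap \bd \D_j = \gamma_i \cap \D_j$ is precisely the simple arc from $M$ to $P_{ij} \in \bd \D$, proving both assertions. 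The main obstacle I anticipate lies in the degenerate behaviour on $\bd \D$---ruling out that two arcs $\SS_i, \SS_j$ overlap on a subarc of $\SS^1$, and that $\SS_1 \cap \SS_2 \cap \SS_3$ is nonempty---which I expect to dispatch by the same local dimension argument that underlies $\D_j \cap \inter \D_i = \emptyset$.
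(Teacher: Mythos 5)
Your reduction to two--piece decompositions via the complements $E_i=\overline{\D\setminus\D_i}$ is a genuinely different route from the paper's, but it has a critical gap at exactly the main point of the lemma: the step from ``$\gamma_i\cap\D_j$ and $\gamma_i\cap\D_k$ are subarcs of $\gamma_i$ attached to the two endpoints $P_{ij}$ and $P_{ik}$, and they cover $\gamma_i$'' to ``hence $\gamma_i$ is split by a unique transition point $M_i$.'' Two closed subarcs of an arc, anchored at opposite endpoints and covering it, may perfectly well overlap in a \emph{nondegenerate} middle subarc; that subarc would then lie in $\D_1\cap\D_2\cap\D_3$, and nothing you have said excludes this. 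The statement of Lemma~\ref{lem:topolofdissection}, which only asserts that the three traces are simple continuous curves, does not exclude it either: two arcs whose union is the Jordan curve $\bd E_i$ can overlap nondegenerately at both ends. This is precisely what the paper proves by a separate argument: if two distinct points $M_1,M_2$ lay in all three tiles, joining them by arcs $\Gamma_i$ with relative interiors in $\inter\D_i$ produces a theta--graph; one of its Jordan subcurves, say $\Gamma_1\cup\Gamma_2$, encloses $\Gamma_3$ and hence $\D_3$, cutting $\D_3$ off from $\bd\D$ and contradicting the nondegeneracy of $\SS_3$. You need this (or an equivalent --- e.g.\ the fact, extractable from the \emph{proof} but not the statement of Lemma~\ref{lem:topolofdissection}, that $\bd\D_j\cap\bd\D_k$ meets $\bd E_i$ only in its two endpoints) before you may speak of ``the'' point $M_i$, of $M_1=M_2=M_3$, or of the triple intersection reducing to $\{M\}$.

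Two lesser issues. First, the degeneracy you flag at the end --- $\SS_i$ and $\SS_j$ overlapping in a nondegenerate subarc of $\bd\D$ --- cannot be dispatched by your local dimension argument: every point of such an overlap lies in $\bd\D_i\cap\bd\D_j$, so no interior point of either tile is involved and no dimension obstruction arises. It is genuinely impossible, but the proof is a crosscut/separation argument of the kind used in the proof of Lemma~\ref{lem:topolofdissection}: accessibility of boundary points of the Jordan domain $\D_i$ gives a crosscut through $\inter\D_i$ joining two interior points of the overlap, and $\inter\D_j$, being connected and disjoint from it, is trapped on one side and cannot accumulate on the whole overlap. Second, the claim that $E_i$ is a topological disc with boundary $\gamma_i\cup\overline{\bd\D\setminus\SS_i}$ requires the connectedness of $\D\setminus\D_i$; this is true but not free. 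For comparison, the paper avoids all of this machinery: it gets ``at most one point'' from the theta--curve argument and ``at least one point'' by writing the connected arc $\overline{\bd\D_1\setminus\SS_1}$ as the union of the two closed sets it shares with $\D_2$ and with $\D_3$, which must therefore intersect.
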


\begin{proof}
Suppose for contradiction that there are
two distinct points $M_1, M_2 \in \mathcal D_i$ for $i=1,2,3$.
For any $i$, let $\Gamma_i$ be a simple,
continuous curve connecting $M_1$ and $M_2$
which is contained in $\inter \mathcal D_i$,
apart from $M_1$ and $M_2$.
Note that for any $i \neq j$, $\Gamma_i \cup \Gamma_j$
is a simple, closed, continuous curve.
Thus, the union of a pair of the curves, say $\Gamma_1 \cup \Gamma_2$
encloses the third one.
This implies that $\Gamma_1 \cup \Gamma_2$ encloses $\mathcal D_3$.
Since $M_1, M_2 \notin \SS^1$ by our conditions, it follows that
$\mathcal D_3$ is disjoint from $\SS^1$; a contradiction.
Thus, $\mathcal D_1 \cap \mathcal D_2 \cap \mathcal D_3$
contains at most one point.
On the other hand, since the closure $\mathcal{X}_1$ of $(\bd \D_1) \setminus \SS_1$
is a simple, connected curve and it can be decomposed into the closed sets
$\mathcal{X}_1 \cap \D_2$ and $\mathcal{X}_1 \cap \D_3$,
it follows that these sets intersect,
that is $\D_1 \cap \D_2 \cap \D_3$ is not empty.
Thus, $\D_1 \cap \D_2 \cap \D_3 = \{ M \}$ for some $M \in \inter \D$.

To prove the second part of Lemma~\ref{lem:3tilestopology}, we may apply an argument like in the proof of Lemma~\ref{lem:topolofdissection}.
\end{proof}

By the \emph{circumcircle} of a topological disc $\mathcal D$
we mean the unique smallest closed Euclidean circle encompassing $\mathcal D$.
The convex hull of the circumcircle is the \emph{circumdisc} of $\mathcal D$,
the radius of the circumcircle is the \emph{circumradius} of $\D$.
Observe that the center of the circumcircle $\mathcal C$ of $\mathcal D$
is in $\conv(\mathcal C\cap\mathcal D)$, as otherwise a smaller
circle would encompass $\mathcal D$.

\begin{lem}\label{lem:circumcircle}
 Assume that $\SS^1$ is the common circumcircle of the non-overlapping
 congruent topological discs $\mathcal D_1$ and $\mathcal D_2$.
 Then there is a diameter $\overline{PQ}$ of $\BB^2$ separating
 $\SS_1 = \D_1 \cap \SS^1$ and $\SS_2 = \D_2 \cap \SS^1$.
 Furthermore, any congruence $g$ with $g(\mathcal D_1) = \mathcal D_2$
 is either the reflection about the line of $\overline{PQ}$,
 or the reflection about $O$.
\end{lem}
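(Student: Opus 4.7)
My plan is to first exploit the isometric invariance of the circumcircle: since $\SS^1$ is the circumcircle of both $\D_1$ and $\D_2 = g(\D_1)$, we have $g(\SS^1) = \SS^1$ and, in particular, $g(O) = O$. Hence $g$ is either a rotation about $O$ or a reflection across a line through $O$, and I would analyze these two cases separately.

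The next ingredient is a local topological observation: if $p \in \SS^1$ lies in the relative interior of $\SS_i := \D_i \cap \SS^1$ as a subset of $\SS^1$, then the Jordan curve $\bd\D_i$ coincides with $\SS^1$ on an $\SS^1$-neighborhood of $p$, so $\D_i$ contains an entire inward half-disc about $p$, and in particular $\inter\D_i$ contains an open such half-disc. Combined with $\inter\D_1 \cap \inter\D_2 = \emptyset$, this yields $\SS_1 \cap \SS_2 \subseteq \bd_{\SS^1}\SS_1 \cap \bd_{\SS^1}\SS_2$; in particular $\SS_1$ and $\SS_2$ cannot share a common $\SS^1$-interior point.

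If $g$ is the reflection across a line $\ell$ through $O$, then any fixed point of $g$ in $\inter\D_1$ would also lie in $\inter\D_2$, so $\inter\D_1 \cap \ell = \emptyset$. By connectedness $\D_1$ lies in a single closed half-plane bounded by $\ell$, and hence $\SS_1$ in the corresponding closed semicircle of $\SS^1$. Since $O \in \conv(\SS_1)$ (the characterization of being the circumcircle recalled in the text) and $\SS_1$ lies in a closed semicircle, $\SS_1$ must contain both endpoints $P, Q$ of the diameter $\ell \cap \B^2$; the symmetric argument gives the same for $\SS_2$ in the opposite closed semicircle. Thus $\overline{PQ}$ is a separating diameter, and $g$ is the reflection about the line $PQ$.

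If $g$ is a rotation by $\theta \in (0, 2\pi)$, the identity $g(O) = O$ combined with disjointness forces $O \notin \inter\D_1$. The heart of the argument---and the main obstacle---is to show that $\theta = \pi$ and to exhibit the separating diameter. The plan here is a case analysis of the angular structure of $\SS_1$ as a (necessarily finite, by Lemma~\ref{lem:Jordan}) union of closed arcs on $\SS^1$: the constraints that $O \in \conv(\SS_1)$ (so $\SS_1$ ``surrounds'' $O$) and that $\SS_1 \cap g(\SS_1) \subseteq \bd_{\SS^1}\SS_1 \cap \bd_{\SS^1}g(\SS_1)$ (from the local observation) should jointly force $\SS_1$ to be a single closed semicircle and $\theta$ to equal $\pi$; the bounding diameter of this semicircle then provides $\overline{PQ}$, and $g$ is the reflection about $O$. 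The delicate step is handling the case where $\SS_1$ has several components and excluding ``spiral'' configurations of $\D_1$ whose lift to the universal cover of $\Re^2 \setminus \{O\}$ has angular extent exceeding $2\pi$; these should be ruled out using the simple connectedness of $\D_1$ together with the local observation applied to all components of $\SS_1$ and their rotates.
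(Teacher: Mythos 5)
Your setup (the circumcircle is isometry-invariant, so $g$ fixes $O$ and is a rotation about $O$ or a reflection in a line through $O$) and your treatment of the reflection case are sound: $\inter\mathcal D_1\cap\ell=\emptyset$, connectedness of $\inter\mathcal D_1$ puts $\mathcal D_1$ in a closed half-plane, and $O\in\conv(\SS_1)$ then forces both endpoints of $\ell\cap\BB^2$ into $\SS_1$. But the rotation case, which you yourself flag as ``the heart of the argument,'' is left as a plan rather than a proof, and the plan as stated does not close. Your key tool there is the local observation that the relative interiors of $\SS_1$ and $\SS_2$ are disjoint on $\SS^1$. That is true but too weak: disjointness of relative interiors is perfectly compatible with $\SS_1$ and $\SS_2=g(\SS_1)$ being \emph{interleaved} around the circle (e.g.\ three arcs of $\SS_1$ alternating with their images under a rotation by $2\pi/3$, with $O\in\conv(\SS_1)$ still holding), and in such a configuration no separating diameter exists and $\theta\ne\pi$. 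Nothing in your outline rules this out; the appeal to simple connectedness and to ``excluding spiral configurations'' is exactly the step that needs an actual argument. Note also that even after $\theta=\pi$ is known, disjointness of relative interiors of $\SS_1$ and $-\SS_1$ still does not by itself yield a separating diameter, so the same gap recurs.

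The missing ingredient is the crossing argument that the paper uses as its very first step (borrowed from its Lemma~\ref{lem:topolofdissection}): if $X_1,Y_1\in\SS_1$ and $X_2,Y_2\in\SS_2$ strictly separated each other on $\SS^1$, then arcs joining $X_i$ to $Y_i$ through $\inter\mathcal D_i$ would have to intersect, contradicting $\inter\mathcal D_1\cap\inter\mathcal D_2=\emptyset$. This immediately gives a line separating $\SS_1$ from $\SS_2$, which must pass through $O$ because $O\in\conv(\SS_1)\cap\conv(\SS_2)$; that produces the diameter $\overline{PQ}$ with $\{P,Q\}\subseteq\SS_1\cap\SS_2$ \emph{before} any case analysis on $g$. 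Only then does the paper classify $g$: it fixes $\{P,Q\}$ (the unique antipodal pair in each $\SS_i$), leaving three candidates, and the reflection in the perpendicular bisector of $\overline{PQ}$ is excluded by a fixed-point argument on a curve in $\inter\mathcal D_1$ joining $P$ to $Q$. I would recommend restructuring your proof around this crossing argument; it replaces both your local half-disc observation and the unproved portion of your rotation case, and it also guarantees that a single diameter serves all congruences $g$, as the statement requires.
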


\begin{proof}
Using the idea of the proof of Lemma~\ref{lem:topolofdissection},
it follows that there are no pairs of points $X_1, Y_1 \in\SS_1$ and
$X_2, Y_2 \in\SS_2$ that strictly separate each other on $\SS^1$.
In other words, there is a line $\ell$ separating $\SS_1$ and $\SS_2$.
On the other hand, as $O\in\conv\SS_1\cap\conv\SS_2$, $\ell$ contains $O$
and $\ell \cap \SS^1 \subseteq \SS_1 \cap \SS_2$,
proving the first statement with $\{ P,Q \} = \ell \cap \SS^1$.
We note that from this argument it also follows that $\SS_1 \cap \SS_2 = \{ P,Q \}$.

Consider some isometry $g$ with $g(\mathcal D_1) = \mathcal D_2$.
The uniqueness of the circumcircle clearly implies that
$g(\SS^1) = \SS^1$, and thus, $g (\{ P, Q \}) = \{ P, Q \}$.
This implies that $g$ is either the reflection about the line of $\overline{PQ}$,
the reflection about the line bisecting $\overline{PQ}$, or the reflection about $O$.
We show that the conditions of the lemma exclude the second case:
Consider a simple, continuous curve $\Gamma$ from $P$ to $Q$ such that
$\Gamma \setminus \{ P, Q \} \subset \inter \mathcal D_1$.
Then at least one point $R$ of $\Gamma$ lies on the line $\ell^{\perp}$
bisecting $\overline{PQ}$. If $g$ is the reflection about $\ell^{\perp}$,
then $g(R) = R$, and hence, $R \in \inter \mathcal D_1 \cap \inter \mathcal D_2$;
a contradiction.
\end{proof}

In the remaining part of Section~\ref{sec:prelim},
we deal only with a monohedral tiling of $\BB^2$,
where the tiles $\D_i$, $i=1,2,\ldots,n$,
are congruent copies of a topological disc $\D$.
For any $j \neq 1$, we fix an isometry $g_{1j}$ mapping $\D_1$ into $\D_j$,
and for any values of $i,j$, we set $g_{ij}=g_{1i}^{-1} \circ g_{1j}$.
Then, by definition, we have $g_{ji}=g_{ij}^{-1}$ for all values of $i,j$.
Finally, we set $\SS_i = \D_i \cap \SS^1$ for all values of $i$.

\begin{lem}\label{lem:nodiameter}
 If $\D$ contains two points at the distance $2$, then $n=1$ or $n=2$,
 and the tiling is rotationally generated.
\end{lem}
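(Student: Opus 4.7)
The plan is to exploit that the diameter condition forces each tile to be inscribed in $\SS^1$, so that Lemma~\ref{lem:circumcircle} applies to every pair of tiles; a combinatorial argument then pins $n$ down to at most $2$, and each remaining case is handled directly. Since each $\D_i$ is a congruent copy of $\D$ contained in $\BB^2$, and $\BB^2$ has diameter $2$, any two points of $\D_i$ at distance $2$ must be a pair of diametrically opposite points of $\SS^1$. Hence $\SS_i$ contains an antipodal pair $\{A_i,B_i\}$, and $\SS^1$ is the common circumcircle of any two tiles $\D_i,\D_j$. Applying Lemma~\ref{lem:circumcircle} to each such pair produces a separating diameter $d_{ij}=\overline{P_{ij}Q_{ij}}$ with $\{P_{ij},Q_{ij}\}\subseteq \SS_i\cap \SS_j$, and identifies the congruence $g_{ij}$ as either the reflection about the line of $d_{ij}$ or the reflection about $O$.

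To rule out $n\geq 3$, the key observation is that a closed semicircle of $\SS^1$ contains exactly one antipodal pair, namely the endpoints of the bounding diameter. Since $\SS_i$ lies in the closed semicircle bounded by $d_{ij}$ and contains the antipodal pair $\{A_i,B_i\}$, it follows that $\{A_i,B_i\}=\{P_{ij},Q_{ij}\}$, so $d_{ij}$ depends only on $i$, not on $j$. Applying this to the three pairs among $\D_1,\D_2,\D_3$ forces $d_{12}=d_{13}=d_{23}$; but then $\SS_2$ and $\SS_3$ both lie in the semicircle opposite $\SS_1$, contradicting that this same diameter separates $\SS_2$ from $\SS_3$.

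For $n=2$, Lemma~\ref{lem:topolofdissection} supplies the common boundary as a simple continuous curve $\SS^*$ from $P$ to $Q$, invariant under $g_{12}$. Parametrize $\SS^*$ by $\gamma\colon[0,1]\to\BB^2$ with $\gamma(0)=P$ and $\gamma(1)=Q$. If $g_{12}$ is the reflection about the line $PQ$, then $g_{12}$ fixes $P$ and $Q$, so $g_{12}\circ\gamma$ is a reparametrization of $\gamma$ by an orientation-preserving involution of $[0,1]$, which must be the identity; hence every point of $\SS^*$ is fixed by $g_{12}$, so $\SS^*=\overline{PQ}$ and the two tiles are half-discs, rotationally generated by the radius $\overline{OQ}$. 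If instead $g_{12}$ is the reflection about $O$, it swaps $P$ and $Q$, so $g_{12}\circ\gamma(t)=\gamma(1-t)$, which forces $\gamma(1/2)=O$; the two halves of $\gamma$ are then related by rotation by $\pi$ about $O$, making the tiling rotationally generated by $\gamma|_{[0,1/2]}$. The case $n=1$ is immediate.

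The main obstacle is the closed-semicircle/antipodal-pair observation that collapses the three candidate separating diameters onto a single one. Once that is recognized, the $n\geq 3$ impossibility reduces to a one-line combinatorial contradiction, and the $n=2$ analysis is a standard parametrization argument.
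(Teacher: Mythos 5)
Your overall strategy coincides with the paper's: two points of $\D$ at distance $2$ must be antipodal points of $\SS^1$, so $\BB^2$ is the common circumdisc of all tiles, Lemma~\ref{lem:circumcircle} applies to every pair, and the observation that a closed semicircle contains only one antipodal pair forces all the separating diameters to coincide in a single diameter $\overline{PQ}$ whose endpoints lie in every tile. Your treatment of $n=2$ is in fact more detailed than the paper's, which only spells out the line-reflection subcase; your parametrization argument for both subcases is correct.

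The step where you exclude $n\geq 3$, however, deviates from the paper and has a gap. You conclude that $\SS_2$ and $\SS_3$ \emph{both lie in the semicircle opposite} $\SS_1$, contradicting that $\overline{PQ}$ separates $\SS_2$ from $\SS_3$. This contradiction only materializes if each $\SS_i$ contains a point of an \emph{open} semicircle. If, say, $\SS_3=\{P,Q\}$ --- a tile meeting $\SS^1$ only in the two antipodal points, which is not excluded at this stage, since Lemma~\ref{lem:only1arc} comes later and itself relies on the present lemma --- then $\SS_3$ lies in both closed semicircles, ``the semicircle opposite $\SS_3$'' is not well defined, and $\SS_3$ is trivially separated from $\SS_2$ by $\overline{PQ}$, so no contradiction arises. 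The gap is easy to fill: since $\BB^2$ is the circumdisc of every tile, each $g_{ij}$ maps $\SS^1$ to itself, hence $g_{ij}(\SS_i)=\SS_j$ and the sets $\SS_i$ are pairwise congruent; as their union is $\SS^1$, none of them can equal the two-point set $\{P,Q\}$ while another is a nondegenerate arc. The paper sidesteps the issue entirely by arguing through the congruences rather than the arcs: each $g_{ij}$ must be the reflection in the line of $\overline{PQ}$ or the reflection about $O$, and three pairwise distinct tiles would force some $g_{ij}$ to be the composition of these two, i.e.\ the reflection in the perpendicular diameter, which Lemma~\ref{lem:circumcircle} forbids. Either repair works, but as written your exclusion of $n\geq 3$ is incomplete.
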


\begin{proof}
If $\mathcal D$ contains two points at the distance $2$,
then each tile contains two antipodal points of $\BB^2$.
Thus, $\BB^2$ is the circumdisc of each tile,
which implies that $g_{ij}(\BB^2)=\BB^2$ for all values of $i,j$.
Since $O \in \D_i$ for some value of $i$,
it also yields that $O \in \D_i$ for all values of~$i$.
Then, by Lemma~\ref{lem:circumcircle},
there is a diameter $\overline{PQ}$ of $\BB^2$
whose endpoints belong to every tile,
and the congruence between any two of them is either a reflection
about the line through $\overline{PQ}$,
or the reflection about the midpoint of $\overline{PQ}$.
This implies that there are at most two tiles.

To prove that the tiling is rotationally generated,
assume that $n=2$, and $\D_2$ is a reflected copy of $\D_1$
about the line through $\overline{PQ}$.
Since in this case $\D_1$ and $\D_2$ are the two closed half discs of $\BB^2$
containing $\overline{PQ}$ in their boundaries, the statement follows.
\end{proof}

\begin{lem}\label{lem:only1arc}
 For all values of $i$, $\SS_i$ ($i=1,\dots,n$) is a closed,
 connected arc in $\SS^1$.
\end{lem}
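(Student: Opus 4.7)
Suppose for contradiction that $\SS_i$ is not a connected arc. Since $\SS_i = \D_i \cap \SS^1$ is automatically closed, the failure must mean it is disconnected, so $\SS_i$ splits into at least two disjoint nonempty closed components; fix two of them, $A$ and $B$. Because $\D_i \subset \BB^2$, the only way a point of $\D_i$ can lie on $\SS^1 = \bd \BB^2$ is by being a boundary point of $\D_i$, so $\SS_i \subset \bd \D_i$. Traversing the Jordan curve $\bd \D_i$ cyclically, we visit the components of $\SS_i$ in some cyclic order, with each pair of consecutive components separated by an open arc of $\bd \D_i$ lying in $\inter \BB^2$; in particular there are at least two such ``inner'' arcs $\beta_1$ and $\beta_2$, each running from a point of $A$ to a point of $B$, one on each side.

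Next I would pair each $\beta_k$ with the arc $\overline{U}_k$ of $\SS^1$ joining its endpoints on the adjacent side of $A \cup B$. The concatenation $J_k := \beta_k \cup \overline{U}_k$ is a Jordan curve contained in $\BB^2$, and because $\Re^2 \setminus \BB^2$ is a connected unbounded set disjoint from $J_k$, the Jordan curve theorem puts the bounded complementary region $W_k$ inside $\BB^2$. A local crossing-number argument at a point of $\beta_k$ shows that $\inter \D_i$ sits on the side of $\bd \D_i$ opposite to $W_k$, so $\inter \D_i \cap W_k = \emptyset$. Applying this to every inner arc, the closures $\overline{W}_k$ are pairwise interior-disjoint closed topological discs whose union is $\BB^2 \setminus \inter \D_i$, and each tile $\D_j$ with $j \neq i$, being connected and disjoint from $\inter \D_i$, must lie entirely in a single $\overline{W}_k$. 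Thus each $\overline{W}_k$ is monohedrally tiled by congruent copies of $\D$.

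The main obstacle is to extract a contradiction from this rigid decomposition. The strategy I would try is to first argue that the circumcircle of $\D_i$ must coincide with $\SS^1$ (since $\SS_i$ has points spread across two separated components on $\SS^1$, the smallest enclosing circle of $\D_i$ cannot be a proper sub-disc of $\BB^2$); then by monohedrality every tile has circumcircle $\SS^1$, so Lemma~\ref{lem:circumcircle} applied to $\D_i$ together with any tile $\D_j \subset \overline{W}_k$ forces $\SS_i \cap \SS_j$ to contain a pair of antipodal points of $\SS^1$, and in particular $\D$ contains antipodal points. Lemma~\ref{lem:nodiameter} then forces $n \leq 2$ with a rotationally generated tiling, and an inspection shows that in each such case $\SS_i$ is in fact connected ($\SS^1$ itself when $n=1$, a closed semicircle when $n=2$), contradicting the initial hypothesis. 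The delicate step is justifying that the circumcircle of $\D_i$ is forced to be $\SS^1$; I expect this may require combining the circumcircle characterization (the centre lies in $\conv(\D_i\cap\text{circumcircle})$) with a case analysis on how the components of $\SS_i$ are distributed around $\SS^1$, or a separate topological argument that rules out $\D_i$ fitting inside a proper sub-disc of $\BB^2$ while still producing multiple inner arcs of $\bd\D_i$.
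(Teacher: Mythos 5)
The topological first half of your argument (decomposing $\BB^2\setminus\inter\D_i$ into gap regions $\overline{W}_k$, one for each arc of $\SS^1\setminus\SS_i$ between components of $\SS_i$, each region tiled by whole tiles) is essentially sound, but it does none of the real work: the entire difficulty of the lemma is concentrated in the step you flag as delicate, and that step is not merely delicate --- the claim you propose to prove there is false as stated. Two separated components of $\SS_i$ can both lie on a very short arc of $\SS^1$ (separated, say, by a thin sliver of another tile reaching the boundary), in which case $\D_i$ fits inside a proper sub-disc of $\BB^2$ and its circumcircle is nowhere near $\SS^1$. So nothing like \textqq{points spread across two separated components force circumradius $1$} can be extracted from the centre-in-$\conv(\mathcal C\cap\D_i)$ characterization or from a case analysis of where the components sit; ruling out this small-circumradius configuration is exactly the hard case, and your proposal contains no mechanism for it.

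The paper's proof attacks that case head on. Taking $P,Q$ a farthest pair in $\SS_1$, if they are antipodal one lands in your easy endgame (circumdisc $=\BB^2$, Lemma~\ref{lem:circumcircle}, Lemma~\ref{lem:nodiameter}, $n\le 2$); otherwise the circumdisc $\BB$ of $\D_1$ has radius $r<1$, the spindle $\ominus^r_{P,Q}$ traps the short arc $\Gamma$ from $P$ to $Q$ inside $\BB$, and repeated use of Lemma~\ref{lem:circumcircle} forces $\D_1\cup\D_2=\ominus^1_{P,Q}$ with $\D_1\cap\SS^1=\{P,Q\}$. The contradiction then comes from arithmetic, not topology: $\SS^1$ decomposes into $k$ arcs congruent to $\Gamma$, the area identity $\pi/n=(s-\sin s)/4$ with $s=2\pi/k$ yields $\sin(2\pi/k)=\pi(2/k-4/n)$, and the algebraicity of $\sin(2\pi/k)$ against the transcendence of $\pi$ forces $\sin(2\pi/k)=0$, contradicting the fact that $\Gamma$ is shorter than a half-circle. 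This transcendence ingredient has no counterpart in your sketch, which strongly suggests that the purely topological route you outline cannot be completed.
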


\begin{proof}
As $\SS_1$ is compact, there are points $P,Q\in \SS_1$ farthest from each other in $\SS_1$.
If $P,Q$ are antipodal points of $\SS^1$, then
every $\mathcal D_i=g_{1i}(\mathcal D_1)$ ($i=1,\ldots$) contains antipodal points,
hence $\BB^2$ is the circumdisc of every tile.
Then Lemma~\ref{lem:nodiameter} yields that $n=1$ or $n=2$.
The case $n=1$ is trivial, and if $n=2$, then
by Lemma~\ref{lem:circumcircle}, there is a diameter $\overline{PQ}$ separating $\SS_1$ and $\SS_2$, which implies that
$\SS_1$ and $\SS_2$ are closed half-circles.
Thus, we may assume that $P,Q$ are not antipodal.

Let $\Gamma\subset\SS^1$ be the shorter arc connecting $P$ and $Q$. We show that $\Gamma \subset \D_1$.

For contradiction, suppose that a point $X \in \Gamma$ does not belong to $\mathcal D_1$.
Then, without loss of generality, we may assume that $X \in \D_2$, and that $X \neq P$, $X \neq Q$.

Let $r>0$ be the radius of the circumdisc  $\BB$ of $\mathcal D_1$.
Since $\D_1$ is compact, and it does not contain antipodal points of $\SS^1$, we have $r < 1$, implying that
$\ominus^r_{P,Q}$ contains $\Gamma\setminus\{P,Q\}$ in its interior.
Thus $\Gamma \subset \BB$, and $\Gamma\setminus\{P,Q\}\subset\inter\BB$.
Let $\Gamma'$ be a continuous curve connecting $P$ and $Q$ such that $\Gamma' \setminus \{P,Q\} \subset \inter\mathcal D_1$.
This yields that $\Gamma \cup \Gamma'$ is a simple, continuous, closed curve in $\BB$ enclosing $\D_2$.
which, by the congruence of $\D_1$ and $\D_2$, implies that the $\BB$ is the circumdisc of $\D_2$ as well.
Hence, by Lemma~\ref{lem:circumcircle} it follows that 
$\conv(\bd \BB\cap\mathcal D_1 \cap \mathcal D_2)$ is a diameter $\delta$ of $\BB$.
As $P, Q$ are the only points of $\Gamma \cup \Gamma'$ that may fall on $\bd \BB$,
we have $\delta = \overline{PQ}$.

From Lemma~\ref{lem:circumcircle} it also follows that $g_{12}$ is the reflection about the line of $\overline{PQ}$,
or the reflection about the midpoint of $\overline{PQ}$, and in particular $g_{21}=g_{12}$.
On the other hand, observe that $g_{12} ( \ominus^1_{P,Q} ) = \ominus^1_{P,Q} = \BB^2 \cap g_{12}(\BB^2)$.
Since $\D_1 \subset \BB^2$ and $\D_1 = g_{12}(\D_2) \subset g_{12}(\BB^2)$,
this implies that $\D_1, \D_2 \subset \ominus^1_{P,Q}$.
Now, if there is a point $R\in \D_2 \cap \bd \ominus^1_{P,Q}\setminus \Gamma$
then $R$ and $X$ can be connected with a continuous curve in $\inter \D_2$,
while $P,Q\in\inter\D_1$, a contradiction.
Hence $ \D_2 \cap \bd \ominus^1_{P,Q}\subset \Gamma$,
and accordingly $\mathcal D_1\cap\SS^1=\{P,Q\}$,
and, in particular, $\mathcal D_1\cap\Gamma=\{P,Q\}$.

Assume that there is an interior point $Y$ of $\Gamma$
that belongs to, say, $\D_3$. Since $P,Q \in\mathcal D_2$,
we may repeat the argument in the previous paragraph,
replacing $\mathcal D_1$ and $\mathcal D_2$ by $\mathcal D_2$ and $\mathcal D_3$, respectively,
and obtain that $\mathcal D_2\cap\Gamma=\{P,Q\}$ contradicting
our assumption that there is an interior point $X\in\mathcal D_2$ of $\Gamma$.
Thus, $\Gamma\subset\mathcal D_2$, which yields by Lemma~\ref{lem:circumcircle} that
$\mathcal D_2 \cap \SS^1 = \Gamma$ and $\ominus^1_{P,Q}=\mathcal D_1 \cup\mathcal D_2$.
From this, in particular, it follows that
$\area (\mathcal D_1 ) = \area (\mathcal D_2 ) = {\area(\ominus^1_{P,Q})}/{2}$.

Since for all values of $i$, $g_{2i}(\{ P,Q \}) \subset \BB^2$,
the definition of $1$-spindle implies that
$\mathcal D_i  \subset \ominus^1_{g_{2i}(P),g_{2i}(Q)} \subset \BB^2$, and $g_{2i}(\mathcal \D_2) \setminus \Gamma \subset \inter \ominus^1_{g_{2i}(P),g_{2i}(Q)}$ is disjoint from $\SS^1$. In other words, the sets $g_{2i}(\Gamma)$ cover $\SS^1$. 
Note that these arcs may intersect each other only at their endpoints,
and if $|\SS^1\cap g_{2i}(\Gamma)|\geq 3$, then $g_{2i}(\Gamma)\subset\SS^1$.
Thus, $\SS^1$ can be decomposed into
finitely many, say $k < n$ circular arcs, each of which is congruent to $\Gamma$.

Let $s={2\pi}/{k}$ denote the arclength of $\Gamma$.
Then $ks=2\pi$ on one hand, and
\[
 \frac{\pi}{n}=\frac{\area(B^2)}{n}=\area(\mathcal D_2)
              =\frac{\area(\ominus^1_{P,Q})}{2}=\frac{s-\sin s}{4}
              =\frac{\frac{2\pi}{k}-\sin\frac{2\pi}{k}}{4}
\]
on the other hand. Thus, we have $\sin\frac{2\pi}{k}=\pi(\frac{2}{k}-\frac{4}{n})$.
The left-hand side is an algebraic number (see, e.g. \cite[Theorem~2.1]{Tangsupphathawat2014}),
from which $\frac{2}{k}=\frac{4}{n}$ follows, hence $\sin\frac{2\pi}{k} = 0$, implying that $k$ is a divisor of $2$, contradicting our assumption
that $\Gamma$ is shorter than a half-circle.
\end{proof}

\begin{rem}\label{rem:convhullarc}
Since $\SS_i \subset \bd \conv \D_i$, it follows that for all values of $i,j$, we have $g_{ij}(\SS_i) \subset \bd \conv \D_j$.
\end{rem}

\begin{rem}\label{rem:disjoint}
For any values of $i,j,k$, the arcs $g_{ik}(\SS_i)$ and $g_{jk}(\SS_j)$ share at most some of their endpoints, or they coincide.
\end{rem}

\begin{proof}
Observe that since for any $i$, $\SS_i$ is contained in the convex hull of $\mathcal D_i$,
if the arcs $g_{ik}(\SS_i)$ and $g_{jk}(\SS_j)$ are not disjoint apart from (possibly) their endpoints,
then $g_{ik}(\SS_i) \cap g_{jk}(\SS_j)$ is a nondegenerate unit circular arc, and thus, 
$g_{ik}(\SS_i) \cup g_{jk}(\SS_j)$ lies on a unit circle $\SS$.

Since $\SS_i = g_{ki} (g_{ik}(\SS_i)) \subset \SS^1$, we have
$g_{ki} (\SS) = \SS^1$. Thus, $\SS_i \cup g_{ki} (g_{jk}(\SS_j)) \subset \SS^1 \cap \mathcal D_ i = \SS_i$.
This implies that $g_{jk}(\SS_j) \subseteq g_{ik}(\SS_i)$. The containment relation $g_{ik}(\SS_i) \subseteq g_{jk}(\SS_j)$ can be obtained using a similar argument, which yields the desired equality.
\end{proof}

\begin{lem}\label{lem:allarc}
 Let $\mathcal D_1, \mathcal D_2, \dots,\mathcal D_n$
 be a monohedral tiling of $\BB^2$, where $n > 1$.
 Then at least two of the arcs $\SS_1$, $g_{21}(\SS_2)$,
 $\dots$, $g_{n1}(\SS_n)$ coincide.
\end{lem}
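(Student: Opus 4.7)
My plan is to argue by contradiction: suppose the $n$ arcs $\SS_1, g_{21}(\SS_2), \ldots, g_{n1}(\SS_n)$ are pairwise distinct. The strategy has three main steps, exploiting a length comparison on $\bd \conv \D_1$.

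First I would establish the length identity $\sum_{i=1}^n |\SS_i| = 2\pi$. Since the tiles cover $\BB^2$, the arcs $\SS_i$ cover $\SS^1$; it therefore suffices to rule out two arcs $\SS_i, \SS_j$ sharing a nondegenerate common subarc $\Gamma$. Such a $\Gamma$ would lie in $\bd\D_i \cap \bd\D_j$, but then both $\D_i$ and $\D_j$ would have to occupy the unique ``inside-$\BB^2$'' side of $\SS^1$ near $\Gamma$, forcing $\inter \D_i \cap \inter \D_j \neq \emptyset$, a contradiction. Since $g_{i1}$ is an isometry, the arcs $g_{i1}(\SS_i)$ then also have total length $2\pi$.

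Next I would push the arcs onto $\bd \conv \D_1$: by Remark~\ref{rem:convhullarc} each $g_{i1}(\SS_i) \subseteq \bd \conv \D_1$, and by Remark~\ref{rem:disjoint} together with the distinctness hypothesis any two of them meet only at endpoints. Summing arclengths along the rectifiable convex curve $\bd \conv \D_1$ therefore yields $\mathrm{length}(\bd \conv \D_1) \geq 2\pi$. But $\conv \D_1 \subseteq \BB^2$ is convex, so the standard monotonicity of perimeter for nested convex bodies gives $\mathrm{length}(\bd \conv \D_1) \leq 2\pi$, with equality precisely when $\conv \D_1 = \BB^2$. Hence $\conv \D_1 = \BB^2$ and $\bd \conv \D_1 = \SS^1$, so the arcs $g_{i1}(\SS_i)$ cover $\SS^1$ up to a finite set.

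Finally, each arc satisfies $g_{i1}(\SS_i) \subseteq g_{i1}(\D_i) = \D_1$, so $g_{i1}(\SS_i) \subseteq \D_1 \cap \SS^1 = \SS_1$. As $\SS_1$ is a single connected subarc of $\SS^1$ by Lemma~\ref{lem:only1arc}, yet contains almost-disjoint arcs of total length $2\pi$, we conclude $\SS_1 = \SS^1$, i.e.\ $\SS^1 \subseteq \D_1$. Since every point of $\SS^1$ has every $\Re^2$-neighborhood meeting the exterior of $\BB^2 \supseteq \D_1$, we get $\SS^1 \subseteq \bd \D_1$; but $\bd \D_1$ is itself a Jordan curve and cannot properly contain the Jordan curve $\SS^1$, so $\bd \D_1 = \SS^1$ and $\D_1 = \BB^2$, contradicting $n > 1$. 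The main obstacle I anticipate is the clean execution of the first step, i.e.\ ruling out pairs $\SS_i, \SS_j$ with a common subarc, together with a clean invocation of convex perimeter monotonicity including its equality case.
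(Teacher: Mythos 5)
Your proof is correct and follows the same skeleton as the paper's: assume for contradiction that no two of the arcs coincide, use Remarks~\ref{rem:convhullarc} and~\ref{rem:disjoint} to place the $n$ arcs on $\bd\conv\D_1$ so that they meet only at endpoints, and conclude that $\conv\D_1$ is forced to be too large, whence $\D_1=\BB^2$, contradicting $n>1$. The only real difference is the closing computation: you compare total arc length with the perimeter of the convex body $\conv\D_1\subseteq\BB^2$ and invoke monotonicity of perimeter for nested convex bodies together with its equality case, whereas the paper compares total turning angle with the total turning $2\pi$ of a convex boundary, concluding that $\conv\D_1=\mathcal P+\BB^2$ for some convex polygon $\mathcal P$ and hence that the circumradius of $\D_1$ is at least $1$. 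The two accountings are interchangeable, and yours uses the more standard tool. One simplification: the step you flag as the main obstacle --- proving that distinct $\SS_i$ and $\SS_j$ share no nondegenerate subarc of $\SS^1$, so that $\sum_i|\SS_i|$ equals $2\pi$ exactly --- can be skipped entirely. Since the tiles cover $\BB^2$, the arcs $\SS_i$ cover $\SS^1$, which already gives $\sum_i|\SS_i|\geq 2\pi$; this inequality is the only direction your perimeter comparison uses, and the equality case of monotonicity still yields $\conv\D_1=\BB^2$. (The omitted claim is true and can be established by the crosscut argument used in Lemma~\ref{lem:topolofdissection}, but it is not needed here.)
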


\begin{proof}
 Suppose for contradiction that the arcs
 $\SS_1, g_{21}(\SS_2), \ldots, g_{n1}(\SS_n)$
 are disjoint apart from possibly their endpoints.
 By our earlier observation these arcs are in $\bd\conv\mathcal D_1$.
 As the total turning angle of these $n$ arcs is $2\pi$,
 and the total turning angle along the boundary of a convex body is also $2\pi$,
 $\bd\conv\mathcal D_1$ may only consist in excess of these arcs some segments
 that connect the endpoints of these arcs in a smooth way.
 In other words,
 $\conv\mathcal D_1=\mathcal P+\BB^2$ for some convex $n$-gon $\mathcal P$.
 This implies that the circumradius of $\mathcal D_1$ is at least $1$,
 with equality if and only if $\mathcal D_1 =\BB^2$, a contradiction.
\end{proof}

\begin{defn}\label{def:multicurve}
 A \emph{multicurve}  (cf.\ also \cite{KurusaMF}) is
 a finite family of continuous simple curves,
 called the \emph{members of the multicurve},
 which are parameterized on non-degenerate closed finite intervals,
 and any point of the plane belongs to at most one member,
 or it is the endpoint of exactly two members.
 If $\mathcal{F}$ and $\mathcal{G}$ are multicurves,
 $\bigcup \mathcal{F}= \bigcup \mathcal{G}$,
 and every member of $\mathcal{F}$ is the union of some members of $\mathcal{G}$,
 we say that $\mathcal{G}$ is a \emph{partition} of $\mathcal{F}$.
\end{defn}

\begin{defn}\label{defn:equidecomposable}
 Let $\mathcal{F}$ and $\mathcal{G}$ be multicurves.
 If there are partitions $\mathcal{F}'$ and $\mathcal{G}'$ of $\mathcal{F}$
 and $\mathcal{G}$, respectively, and a bijection
 $f\colon\mathcal{F}' \to \mathcal{G'}$ such that $f(\C)$ is
 congruent to $\C$ for all $\C \in \mathcal{F}'$,
 we say that $\mathcal{F}$ and $\mathcal{G}$ are \emph{equidecomposable}.
\end{defn}

The following lemma can be proved very similarly to the analogous statement
for equidecomposability of polygons \cite{Frederickson}, thus we omit the proof. 

\begin{lem}\label{lem:equirel}
 Equidecomposability is an equivalence relation on the family of multicurves in $\Re^2$.
\end{lem}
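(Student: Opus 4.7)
The plan is to verify the three defining properties of an equivalence relation. Reflexivity is immediate: take $\F' = \F$ as a partition of itself and let $f$ be the identity map; every member is congruent to itself. Symmetry is nearly as easy: if partitions $\F'$, $\G'$ and a bijection $f\colon \F' \to \G'$ witness the equidecomposability of $\F$ and $\G$, then the same partitions together with $f^{-1}$ witness the equidecomposability of $\G$ and $\F$, because congruence is a symmetric relation.

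The substantial work is in transitivity. Suppose $\F$ and $\G$ are equidecomposable via partitions $\F_1$, $\G_1$ with bijection $f\colon \F_1 \to \G_1$, and $\G$ and $\mathcal{H}$ are equidecomposable via partitions $\G_2$, $\mathcal{H}_1$ with bijection $g\colon \G_2 \to \mathcal{H}_1$. My first step is to build a common refinement $\G^*$ of $\G_1$ and $\G_2$. Let $V \subset \bigcup \G_1 = \bigcup \G_2$ be the finite set of all endpoints of members of $\G_1$ together with all endpoints of members of $\G_2$, and take $\G^*$ to be the family of maximal sub-arcs of members of $\G_1$ (equivalently, of $\G_2$) whose interiors are disjoint from $V$. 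By construction, $\G^*$ is simultaneously a partition of $\G_1$ and of $\G_2$.

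Next, I would transport $\G^*$ back to $\F$ along $f$ and forward to $\mathcal{H}$ along $g$. For each $\C \in \F_1$, fix a congruence $\phi_\C\colon \C \to f(\C)$, and for each $\C^* \in \G^*$ contained in $f(\C)$ take the sub-arc $\phi_\C^{-1}(\C^*) \subset \C$. Collecting these sub-arcs over all $\C^* \in \G^*$ produces a family $\F^*$ that partitions $\F$; the analogous construction with congruences witnessing $g$ produces a partition $\mathcal{H}^*$ of $\mathcal{H}$. The piecewise composition $g \circ f^{-1}$, applied to each $\C^* \in \G^*$ and then to the corresponding pull-back and push-forward, defines a bijection $\F^* \to \mathcal{H}^*$ whose value on each member is congruent to that member, since a composition of two congruences is a congruence.

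The main obstacle is verifying that $\G^*$ (and then $\F^*$, $\mathcal{H}^*$) is a genuine multicurve as required by Definition~\ref{def:multicurve}. The only delicate case is a point $p \in V$ which is an endpoint of two members of $\G_1$ and lies in the interior of some member $\C'$ of $\G_2$, or the symmetric situation. Here one must check that the two branch germs at $p$ coming from $\G_1$ coincide locally with the two branches obtained by splitting $\C'$ at $p$, so that exactly two members of $\G^*$ have $p$ as an endpoint. Once this local bookkeeping is handled, transport via $f$ and $g$ preserves the multicurve structure automatically, since congruences are homeomorphisms that send interiors to interiors and endpoints to endpoints. The remainder of the argument is formally identical to the classical transitivity proof for equidecomposability of polygons in \cite{Frederickson}, which is precisely the analogy the authors invoke.
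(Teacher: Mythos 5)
Your proposal is correct and follows exactly the route the paper intends: the authors omit the proof, remarking only that it is "very similar to the analogous statement for equidecomposability of polygons" in \cite{Frederickson}, and your argument (trivial reflexivity and symmetry, transitivity via a common refinement $\G^*$ of the two middle partitions, transported back and forth by the witnessing congruences) is precisely that standard argument adapted to multicurves. The one point you flag as delicate does go through: an interior point of a piece of $\G^*$ avoids every endpoint of every member of $\G_1$ and of $\G_2$, hence by the multicurve condition lies in exactly one member of each, so each piece of the refinement sits inside a single member of both partitions and the refined family is again a multicurve.
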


\begin{cor}\label{cor:graphs}
 If $\mathcal{F}$ and $\mathcal{G}$ are multicurves
 with $\bigcup \mathcal{F} = \bigcup \mathcal{G}$,
 then $\mathcal{F}$ and $\mathcal{G}$ are equidecomposable.
\end{cor}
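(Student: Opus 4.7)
The plan is to construct a common refinement of $\mathcal F$ and $\mathcal G$ by subdividing each member at every point that is an endpoint of any member of either multicurve, and then to observe that the two refinements coincide as families of arcs.

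Let $V$ be the (finite) union of the endpoint sets of all members of $\mathcal F$ and of $\mathcal G$. For every member $\mathcal C\in\mathcal F$, the finitely many points of $V\cap\mathcal C$ chop $\mathcal C$ into consecutive sub-arcs; collect all these sub-arcs into a family $\mathcal F'$, and build $\mathcal G'$ from $\mathcal G$ in the same fashion. I would first verify that $\mathcal F'$ is a multicurve and a partition of $\mathcal F$ in the sense of Definition~\ref{def:multicurve}. The partition property is immediate. For the multicurve axiom, a point $x\notin V$ that lies in $\bigcup\mathcal F'$ is interior to exactly one sub-arc because in $\mathcal F$ it was interior to exactly one member. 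A point $x\in V$ that was an endpoint of one or two members of $\mathcal F$ inherits the same count in $\mathcal F'$, while a point of $V$ that lies in the interior of some $\mathcal C\in\mathcal F$ (hence, by the multicurve axiom for $\mathcal F$, is disjoint from all other members of $\mathcal F$) becomes the common endpoint of exactly the two sub-arcs of $\mathcal C$ meeting at $x$. The identical check applies to $\mathcal G'$.

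The core of the argument is then to show that $\mathcal F'$ and $\mathcal G'$ consist of the same arcs as subsets of the plane. Writing $X=\bigcup\mathcal F=\bigcup\mathcal G$, I claim that the members of $\mathcal F'$ are exactly the closures in $X$ of the connected components of $X\setminus V$. Indeed, if $A\in\mathcal F'$, then by the multicurve axiom a sufficiently small plane neighbourhood of any point of $\inter A$ meets $X$ only inside $A$, so $\inter A$ is open in $X\setminus V$; moreover $\inter A$ is connected, and its closure in $X$ adds only the two endpoints of $A$, both of which lie in $V$, so $\inter A$ is also closed in $X\setminus V$ and is therefore a component of $X\setminus V$. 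Since every point of $X\setminus V$ lies in such an interior, every component is obtained this way. The analogous argument for $\mathcal G'$ yields the same collection of arcs.

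Finally, the identity map furnishes a bijection $f\colon\mathcal F'\to\mathcal G'$ with $f(A)=A$ for every arc $A$, and the identity isometry of $\Re^2$ witnesses the required congruence; appealing if desired to Lemma~\ref{lem:equirel} one then concludes that $\mathcal F$ and $\mathcal G$ are equidecomposable. The step I expect to need the most care is the topological verification that the interiors of sub-arcs are precisely the connected components of $X\setminus V$, which relies on the local one-dimensional structure of $X$ enforced by the \emph{degree-at-most-two} clause of Definition~\ref{def:multicurve}.
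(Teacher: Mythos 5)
Your proof is correct, and it takes a more explicit route than the paper's. The paper argues by first restricting to the connected components of $\bigcup\mathcal F$, then invoking Lemma~\ref{lem:equirel} (transitivity of equidecomposability) to reduce to the case where $\mathcal G$ is a single simple continuous curve, at which point $\mathcal F$ is a partition of $\mathcal G$ and the claim is immediate. You instead construct the common refinement directly, cutting every member at the finite set $V$ of all endpoints of both families and identifying the resulting arcs with the closures of the components of $X\setminus V$; this produces an explicit pair of partitions and the identity bijection required by Definition~\ref{defn:equidecomposable}, with no appeal to Lemma~\ref{lem:equirel}, and it also covers without comment the case of a component that is a closed curve (where \textqq{assume $\mathcal G$ is a simple curve} would need a word). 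The price is the topological verification you flag at the end; note that the key fact underlying your \textqq{sufficiently small neighbourhood} step is that each member, being a continuous injection of a compact interval into the plane, is a homeomorphism onto its image, so that $\mathcal C\setminus\relint A$ is compact and has positive distance from any fixed point of $\relint A$ --- with that said explicitly, the argument that $\relint A$ is clopen in $X\setminus V$ is complete.
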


\begin{proof}
Clearly, it is sufficient to prove the statement
for the connected components of $\bigcup \mathcal{F}$,
and by Lemma~\ref{lem:equirel} we may assume that one of the multicurves,
say $\mathcal{G}$, is a simple continuous curve.
But then $\mathcal{F}$ is a partition of $\mathcal{G}$,
in which case the statement is obvious.
\end{proof}

\begin{cor}\label{cor:cutcong}
If $\mathcal{F}$ and $\mathcal{G}$ are equidecomposable,
and their subfamilies $\mathcal{F}' \subseteq \mathcal{F}$
and $\mathcal{G}' \subseteq \mathcal{G}$ are equidecomposable,
then $\mathcal{F} \setminus \mathcal{F}'$ and $\mathcal{G} \setminus \mathcal{G}'$
are equidecomposable.
\end{cor}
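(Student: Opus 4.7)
My plan is to construct the desired equidecomposition by a Cantor--Schr\"oder--Bernstein-type argument that combines the two given equidecompositions.

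Fix partitions $\widetilde{\mathcal{F}},\widetilde{\mathcal{G}}$ witnessing $\mathcal{F}\sim\mathcal{G}$ via a congruence-preserving bijection $\phi$, and partitions $\widetilde{\mathcal{F}}',\widetilde{\mathcal{G}}'$ witnessing $\mathcal{F}'\sim\mathcal{G}'$ via $\phi'$. After replacing these by suitable common refinements (using Lemma~\ref{lem:equirel}), I would arrange that on $\mathcal{F}'$ the restrictions of $\widetilde{\mathcal{F}}$ and $\widetilde{\mathcal{F}}'$ coincide, and similarly on $\mathcal{G}'$. Each member of $\widetilde{\mathcal{F}}$ then lies either entirely in $\mathcal{F}'$ or entirely in $\mathcal{F}\setminus\mathcal{F}'$, and classifying a piece by its own membership and by that of its $\phi$-image yields four subfamilies $\mathcal{F}^{\alpha\beta}$ indexed by $\alpha,\beta\in\{\mathrm{in},\mathrm{out}\}$, with $\mathcal{G}^{\alpha\beta}=\phi(\mathcal{F}^{\alpha\beta})$. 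Then $\mathcal{F}\setminus\mathcal{F}'=\mathcal{F}^{\mathrm{out,in}}\cup\mathcal{F}^{\mathrm{out,out}}$ and $\mathcal{G}\setminus\mathcal{G}'=\mathcal{G}^{\mathrm{in,out}}\cup\mathcal{G}^{\mathrm{out,out}}$, and $\phi$ already matches $\mathcal{F}^{\mathrm{out,out}}$ congruently with $\mathcal{G}^{\mathrm{out,out}}$.

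It remains to match $\mathcal{F}^{\mathrm{out,in}}$ with $\mathcal{G}^{\mathrm{in,out}}$ by congruence. I would introduce the partial bijection $\sigma=(\phi')^{-1}\circ\phi$ defined on $\mathcal{F}^{\mathrm{in,in}}\cup\mathcal{F}^{\mathrm{out,in}}$ (the pieces whose $\phi$-image lies in $\mathcal{G}'$); this is a congruence-preserving bijection onto $\mathcal{F}^{\mathrm{in,in}}\cup\mathcal{F}^{\mathrm{in,out}}$. Viewing $\sigma$ as a directed graph on pieces, members of $\mathcal{F}^{\mathrm{out,in}}$ are sources, members of $\mathcal{F}^{\mathrm{in,out}}$ are sinks, and members of $\mathcal{F}^{\mathrm{in,in}}$ have both in- and out-degree one. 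Since the total number of pieces is finite, the connected components of the graph consist of finite directed paths from sources to sinks, together with possibly some cycles confined to $\mathcal{F}^{\mathrm{in,in}}$. Following the unique path from each source $P_0\in\mathcal{F}^{\mathrm{out,in}}$ to its terminal sink $P_k\in\mathcal{F}^{\mathrm{in,out}}$ yields a chain $P_0\cong P_1\cong\dots\cong P_k\cong\phi(P_k)$ with $\phi(P_k)\in\mathcal{G}^{\mathrm{in,out}}$. This assembles into a congruence-preserving bijection $\mathcal{F}^{\mathrm{out,in}}\to\mathcal{G}^{\mathrm{in,out}}$, which together with $\phi|_{\mathcal{F}^{\mathrm{out,out}}}$ exhibits the desired equidecomposition.

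The main obstacle is the initial refinement step: after pushing forward the common refinement on $\mathcal{F}'$ by both $\phi$ and $\phi'$, the two resulting partitions of the overlap region inside $\mathcal{G}'$ must be made to agree, possibly by iterative pullback and refinement until the cut points stabilize. Since each such refinement step adds cut points coming only from a fixed finite collection of piecewise rigid motions, this can be arranged, after which the orbit-counting argument above is purely combinatorial.
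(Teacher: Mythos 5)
Your combinatorial core is fine and is genuinely different from the paper's route: once the partitions are compatible, the partial bijection $\sigma=(\phi')^{-1}\circ\phi$ on a finite set of pieces decomposes into cycles inside $\mathcal{F}^{\mathrm{in,in}}$ and directed paths from the sources $\mathcal{F}^{\mathrm{out,in}}$ to the sinks $\mathcal{F}^{\mathrm{in,out}}$, and composing congruences along each path, followed by $\phi$, does match $\mathcal{F}^{\mathrm{out,in}}$ congruently with $\mathcal{G}^{\mathrm{in,out}}$. The paper instead first normalizes so that $\mathcal{F}$ and $\mathcal{G}$ become two partitions of one and the same curve with a common subdivision, and then extends the congruence bijection from $\mathcal{F}'$ to $\mathcal{G}'$ to the complementary pieces by counting within congruence classes; your chain argument is a Schr\"oder--Bernstein-style substitute for that counting step.

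The genuine gap is exactly the step you flag as ``the main obstacle,'' and the reason you offer for it being surmountable does not hold. To make the four-way classification of pieces well defined you need a common refinement of $\widetilde{\mathcal{F}},\widetilde{\mathcal{F}}',\widetilde{\mathcal{G}},\widetilde{\mathcal{G}}'$ whose set of cut points is invariant under the partial isometries $\phi$, $\phi^{-1}$, $\phi'$ and $(\phi')^{-1}$; equivalently, the orbit of the finitely many original cut points under the pseudogroup generated by these maps must be finite. Finitely many partial isometries of a curve can generate infinite orbits --- this is precisely the interval-exchange phenomenon, and already the single composition $\phi^{-1}\circ\phi'$ acting on $\bigcup\mathcal{F}'$ can behave like an irrational rotation of an arc --- so the observation that new cut points ``come only from a fixed finite collection of piecewise rigid motions'' does not imply that the iterative refinement stabilizes. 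Without this stabilization a member of $\widetilde{\mathcal{F}}$ need not lie entirely in $\mathcal{F}'$ or in its complement after $\widetilde{\mathcal{F}}'$ is refined, the labels $\mathcal{F}^{\alpha\beta}$ are undefined, and the path argument cannot start. You need either a different justification for the existence of a compatible refinement or a reduction, as in the paper, to the situation where both families are partitions of a single parameterized curve, where the required refinement is taken on the parameter interval before the congruences are invoked.
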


\begin{proof}
By Lemma~\ref{lem:equirel}, we may assume that $\bigcup \mathcal{F} = \bigcup \mathcal{G}$. 
Without loss of generality,
we may also assume that $\bigcup \mathcal{F}$ is connected,
which yields that we may regard both $\mathcal{F}$ and $\mathcal{G}$
as different partitions of the same simple, continuous curve.
More specifically, after reparametrizing if necessary,
we may assume that there is some curve $\C\colon[a,b] \to \Re^2$,
and partitions $P_F$ and $P_G$ of $[a,b]$ such that the elements of $\mathcal{F}$
and $\mathcal{G}$ are the restrictions of $\C$ to the subintervals of $P_F$ and $P_G$,
respectively.
By Corollary~\ref{cor:graphs},
a multicurve is equidecomposable with any of its partitions,
and hence, we may assume that $P_F = P_G$,
and there is a bijection between the elements of $\mathcal{F'}$ and $\mathcal{G'}$
such that the corresponding elements are congruent.
Since congruence is an equivalence relation,
it is clear that any such bijection can be extended to all subintervals of $P_F$,
which proves the assertion.
\end{proof}

\section{Proof of Theorem~\ref{thm:main}}\label{sec:proof}

First, consider a monohedral tiling of $\BB^2$
with the topological discs $\D_1$ and $\D_2$.
The containment $O \in \D_1 \cap \D_2$ can be proved
in a number of elementary ways (see, e.g. \cite{Kanelbelov2002});
here we also show that the tiling is rotationally generated.

By Lemma~\ref{lem:only1arc}, for $i=1,2$,
$\SS_i = \D_i \cap \SS^1$ is a connected arc and hence,
$\SS_1$ or $\SS_2$ is an arc of length at least $\pi$.
Thus, $\mathcal D_1$ or $\mathcal D_2$
contains a pair of antipodal points of $\BB^2$,
which, by Lemma~\ref{lem:nodiameter},
implies that the tiling is rotationally generated.

From now on,
we consider the case that $\BB^2$ is decomposed
into three congruent topological discs $\D_1, \D_2, \D_3$, and for $i=1,2,3$,
we set $\SS_i = \D_i \cap \SS^1$.
By Lemmas~\ref{lem:only1arc} and \ref{lem:nodiameter},
we may assume that each tile intersects $\SS^1$
in a nondegenerate cicle arc, which is smaller than a half-circle.

By Lemma~\ref{lem:3tilestopology},
we have that $\D_1 \cap \D_2 \cap \D_3$ consists of a single point $M \in \inter \BB^2$,
and that for any $i \neq j$, $\D_i \cap \D_j$ is a simple,
continuous curve connecting $M$ and a point of $\SS^1$.

To prove the assertion, we distinguish some cases.
Before we do it, we observe that by Remark~\ref{rem:disjoint},
any pair of the curves $\SS_1$, $g_{21}(\SS_2)$ and $g_{31}(\SS_3)$
intersect in at most a common endpoint, or they coincide.

\noindent
\emph{\textbf{Case 1}:
No pair of the arcs $\SS_1$, $g_{21}(\SS_2)$ and $g_{31}(\SS_3)$ coincide.}%
\\
In this case we immediately have a contradiction by Lemma~\ref{lem:allarc}.

\noindent
\emph{\textbf{Case 2}:
Two of the arcs  $\SS_1$, $g_{21}(\SS_2)$ and $g_{31}(\SS_3)$ coincide,
the third one is different.}%
\\
Using a suitable relabeling of the tiles, we may assume that $\SS_1 = g_{21}(\SS_2)$.
Let the arclength of this arc be $0 < \alpha < \pi$,
and the arclength of $\SS_3$ be $\beta$.
The equality $\SS_1 = g_{21}(\SS_2)$ implies, in particular,
that $g_{21}$ is an isometry of $\SS^1$; or more generally that
it is either the reflection about the symmetry axis $\ell$ of
$\SS_1 \cup\mathcal  S_2$ or a rotation around $O$ with angle $\alpha$.
We may assume without loss of generality that $\ell$ is the $y$-axis,
the common point of $\SS_1$ and $\SS_2$ is $(0,1)$,
and $\SS_1\subset\{(x,y):x\le0\}$.
Furthermore, in the proof we set
$\C_1 = \D_1 \cap \D_3$, and $\C_2 = \D_2 \cap \D_3$.

\noindent
\emph{\textbf{Subcase 2.a}:
$g_{21}$ is the reflection about $\ell$.}%
\\
If there is a point $P\in\inter\mathcal D_1\cap\{(x,y):x>0\}$,
then a continuous curve $\Gamma$ in $\inter\mathcal D_1$
connects $P$ and the midpoint of $\SS_1$,
so $g_{12}(\Gamma)$ connects the midpoint of $\SS_2$
to $g_{12}(P)$ in $\inter\mathcal D_2$.
This implies that $\Gamma\cap g_{12}(\Gamma)$ 
is in $\inter\mathcal D_1\cap\inter\mathcal D_2=\emptyset$, which is a contradiction.
Thus we have $\mathcal D_1\subset\{(x,y):x\le0\}$ and also
$\mathcal D_2\subset\{(x,y):x\ge0\}$.

Observe that
$g_{13}(\SS_1)=g_{23}(g_{12}(\SS_1)) = g_{23}(\SS_2)$,
and
$\mathcal D_3=\cl(\mathcal \BB^2 \setminus (\mathcal D_1 \cup \mathcal D_2))$
is symmetric in $\ell$.
We denote this arc of length $\alpha$ by $\SS=g_{13}(\SS_1)$.
Note that by the conditions of  Case 2 $\SS \neq \SS_3$, and
$\SS \subset \bd \conv \mathcal D_3$ by Remark~\ref{rem:convhullarc}.
Furthermore, $\bd \conv \mathcal D_3$ does not contain any arc of length $\alpha$
apart from $\SS$ and possibly $\SS_3$, as otherwise the idea of the proof of
Lemma~\ref{lem:allarc} yields a contradiction.
Thus, $\SS$ is symmetric in the $y$-axis.
 
Since $\mathcal D_3$ is connected, and every point of $\ell$ belongs
either to $\mathcal D_3$, or to both $\mathcal D_1$ and $\mathcal D_2$,
the segment connecting the midpoint $X$ of $\SS$ and
the midpoint $Y$ of $\SS_3$ belongs to $\mathcal D_3$.
Let the length of $\overline{XY}$ be $\delta > 0$, and note that
the fact $X,Y \in \bd \conv \D_3$ yields
that the line through $\overline{XY}$ intersects $\D_3$
exactly in $\overline{XY}$
and $\overline{XY} \setminus \{ X,Y \} \subset \inter \D_3$.
For $i=1,2$, $g_{3i}(\overline{XY})$ is
the segment of length $\delta$ in $\BB^2$,
starting at the midpoint of $\SS_i$, and perpendicular to it.
Thus, if $\delta < 1$, then $O \notin \mathcal D_i$ for any value of $i$,
if $\delta > 1$, then $O \in \inter \mathcal D_i$ for all values of $i$,
and if $\delta = 1$, then $O$ is the midpoint of a unit circle arc
in the boundary of each of the $\D_i$s, which is a contradiction.

\noindent
\emph{\textbf{Subcase 2.b}:
 $g_{21}$ is the rotation around $O$ by angle $\alpha$ in counterclockwise direction.}%
\\
As $O$ is a fixed point of $g_{21}$,
it follows that either $O \in \mathcal D_1 \cap \mathcal D_2$,
or $O \notin \mathcal D_1 \cup \mathcal D_2$.
By the definition of tiling and our assumptions,
in the first case $O \in \bd \mathcal D_1 \cap \bd \mathcal D_2$,
and in the second case $O \in \inter \mathcal D_3$.

First, consider the case that $O \in \bd\mathcal D_1 \cap \bd\mathcal D_2$.

Recall that by Lemma~\ref{lem:3tilestopology},
$D_1 \cap D_2 \cap D_3$ is a single point $M$,
and for any $i \neq j$, $\D_i \cap \D_j$ is a simple continuous curve connecting $M$
to a point of $\SS^1$. Thus, if $O=M$, then $g_{21}(\D_1 \cap \D_2) = \D_1 \cap\D_3$,
and $g_{12}(\D_1 \cap \D_2) = \D_2 \cap \D_3$.
Since $\bd \D_1$ and $\bd \D_3$ are equidecomposable,
this implies that $\SS_1$ and $\SS_3$ are congruent,
and hence $\alpha = {2\pi}/{3}$.
In other words, if $O=M$, then the tiling is rotationally generated.
Thus, we assume that $O \notin \D_3$,
which by the compactness of $\D_3$ yields the existence
of a small closed circular disc $\BB$ centered at $O$
such that $\BB \cap \D_3 = \emptyset$.
Let $t\mapsto \C(t)$ be a continuous parameterization of
the curve  $\D_1 \cap \D_2$ satisfying $O=\C(0)$,
and let $t_{+}=\sup\{t:\C([0,t]))\subset \BB\}$
and $t_{-}=\inf\{t:\C([t,0])\subset \BB\}$.
Then $g_{12}(\C(t_\pm))=\C(t_\mp)$,
which implies that $g_{12}$ is the reflection about $O$.
Thus $\alpha=\pi$ and $\beta = 0$,
which contradicts our assumptions.

In the remaining part of Subcase 2.b, we assume that $O\in \inter \mathcal D_3$.

Let $M_1=g_{21}(M)$ and $M_2=g_{12}(M)$.
Since $\alpha > 0$, we have $M_2 \neq M$.
On the other hand, we clearly have $M_2 \in \bd \D_2$ and $M_2 \notin \SS^1$.

Let $\BB$ be the circular disc in $\D_3$
that is centered at $O$ and is of maximum radius $r>0$.
Then $\BB$ is tangent to at least one of the curves $\mathcal C_1$ and $\mathcal C_2$,
say $\mathcal C_2$ touches $\BB$ in $X_2\in\bd\BB\cap\mathcal C_2$.
Let $X_1=g_{21}(X_2)$.
Then $X_1 \in \BB\cap \mathcal D_1=\BB\cap \mathcal C_1$ clearly,
hence $X_2\in g_{12}(\mathcal C_1)\cap \mathcal C_2\neq \emptyset$.
Since $g_{12}(\C_1)$ is a continuous curve in $\bd \D_2$,
connecting the intersection point of $\SS_1$ and $\SS_2$ to $M_2$ in $\inter \BB^2$,
it follows that $M \in g_{12}(\C_1)$, that is, $M_1\in \mathcal C_1$,
implying also $M_2 \in \C_2$.

Thus, $M_1$ divides the curve $\mathcal C_1$ into two parts:
one from $M$ to $M_1$, which we denote by $\mathcal C_1^M$,
and the other one from $M_1$ to a point of $S_1$,
which we denote by $\mathcal C_1^S$.
We define the parts $\C_2^M$ and $\C_2^S$ of $\C_2$ similarly,
using $M_2$ in place of $M_1$.
Furthermore, we set $\mathcal C_3^S=\mathcal D_1 \cap \mathcal D_2$.
 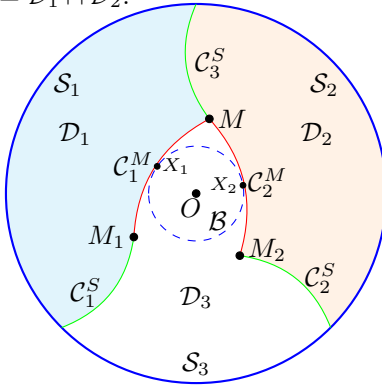
\begin{figure}[H]\vskip-1.2\baselineskip\centering
   \begin{tikzpicture}[x=-25mm,y=25mm,
    pont/.style={circle,draw=black,fill=black,inner sep=1.0pt}]
    \coordinate (O) at (0,0);

    \coordinate (M) at (100:0.4);
    \coordinate (N) at ( 90:1);

    \coordinate (S2) at ( -45:1);
    \coordinate (S1) at (-135:1);

    \coordinate (M2) at ( -35:0.4);
    \coordinate (M1) at ( 235:0.4);

    \fill[orange!10] (M) to[bend left] (N) arc (90:225:1) to[bend right] (M1) to[bend right=29] (M);

    \fill[cyan!10] (M) to[bend left] (N) arc (90:-45:1) to[bend right] (M2) to[bend left=29] (M);

    \draw[blue,thick] (O) circle (1);
    \draw[blue,thin,dashed] (O) circle (0.25);

    \draw[bend left,green] (M) to node[pos=0.5,right,black]{$\mathcal C_3^S$} (N);
    \draw[bend left,green] (M1) to
     node[pos=0.5,right,black]{$\mathcal C_2^S$} (S1);
    \draw[bend left,green] (M2) to 
     node[pos=0.5,left,black]{$\mathcal C_1^S$} (S2);

    \draw[bend left=29,red] (M) to
         node[pos=0.5,right,xshift=-2,black]{$\mathcal C_2^M$} (M1);
    \draw[bend right=29,red] (M) to
         node[pos=0.5,left,xshift=3,black]{$\mathcal C_1^M$} (M2);

    \node[pont,label={right,xshift=-2:$M$}] at (M) {};
    \node[pont,label={right,xshift=-2,yshift=2:$M_2$}] at (M1) {};
    \node[pont,label={left,xshift=2:$M_1$}] at (M2) {};
    
    \node[pont,inner sep=0.75pt,label={right,xshift=-3:\scriptsize$X_1$}]
     (X2) at (35:0.25) {};
    \node[pont,inner sep=0.75pt,label={left,xshift=3:\scriptsize$X_2$}]
     (X1) at (35+135:0.25) {};

    \node[pont,label={below,xshift=-2,yshift=3:$O$}] at (O) {};
    \node[label={above,xshift=2,yshift=-4:$\BB$}] at (-110:0.25) {};

    \node[label={right:$\mathcal D_2$}] at (145:0.55) {};
    \node[label={left:$\mathcal D_1$}] at ( 35:0.55) {};
    \node[label={above,yshift=-4:$\mathcal D_3$}] at (-90:0.65) {};

    \node[label={left,xshift=2:$\SS_2$}] at (145:1.0) {};
    \node[label={right,xshift=-2:$\SS_1$}] at ( 35:1.0) {};
    \node[label={above,yshift=-4:$\SS_3$}] at (-90:1.0) {};
    
   \end{tikzpicture}\vskip-.85\baselineskip
   \caption{$\BB^2$ is dissected into three topological discs.}%
   \label{fig:threeparts}\vskip-0.75\baselineskip
 \end{figure}
We clearly have
$g_{21}(\mathcal C_2^M) = \mathcal C_1^M$, $g_{21}(\mathcal C_2^S)=\mathcal C_3^S$
and $g_{21}(\mathcal C_3^S)=\mathcal C_1^S$.
Observe that since $\D_1, \D_2$ and $\D_3$ are congruent,
their boundaries are equidecomposable.
Furthermore, as $\mathcal C_1^S$, $\mathcal C_2^S$, and $\mathcal C_3^S$,
and also $\C_1^M$ and $\C_2^M$ are congruent,
we obtain by Corollary~\ref{cor:cutcong}
that $\SS_1$ and $\C_1^M \cup \SS_3$ are equidecomposable.
Thus we deduce that $\mathcal C_1^M$ (and also $\mathcal C_2^M$)
is a multicurve such that its every member curve
is a unit circular arc, and their total length is $\alpha - \beta \geq 0$.

If a unit circular arc $\C$ is contained in the boundary of a tile $\D_i$,
it may happen that the convex side of $\C$ belongs to $\inter \D_i$,
and the concave side of $\C$ does not belong to $\D_i$.
In this case we say that $\C$ is a \emph{convex circular arc} of $\D_i$,
and in the opposite case that it is a
\emph{concave circular arc} of $\D_i$.
Clearly, if $\C$ is a unit circular arc in $\D_i \cap \D_j$ for some $i \neq j$,
then it is a convex circular arc of exactly one of $\D_i$ and $\D_j$.
Let $x$ and $y$ denote the total length of
the convex and concave unit circular arcs of $\D_1$ in $\C_1^M$.
Since $\C_1^M$ and $\C_2^M$ are congruent,
the total length of the convex and concave unit circular arcs of $\D_2$ in $\C_2^M$
is also $x$ and $y$, respectively.
Thus, the total length of the convex and concave unit circular arcs
of $\D_3$ in $\mathcal C_1^M\cup\mathcal C_2^M$ is $2y$ and $2x$, respectively.

The congruence of the tiles $\D_i$ and the curves $\C_i^S$ for $i=1,2,3$
yields that the total lengths of the convex and concave unit circular arcs
of $\D_1$ in $\SS_1 \cup \C_1^M$ is equal to the total lengths of
the convex and the concave unit circular arcs of $\D_3$
in $\SS_3 \cup \C_1^M \cup \C_2^M$, respectively.
This equality for convex circular arcs implies that $\alpha + x = \beta + 2y$,
and the equality for concave arcs implies $y = 2x$.
From these equations it follows that $x = (\alpha-\beta)/3$ and $y=2(\alpha-\beta)/3$.
Thus, in particular, it follows that if $\beta = \alpha$,
then $x=y=0$ and $M = M_1 =M_2$, which yields that $\alpha = 0$,
a contradiction.
This means that $\beta < \alpha$.

We show that $M$ is not an interior point of a unit circular arc in $\bd \D_3$
longer than $\alpha-\beta$.
Suppose for contradiction that $M$ is an interior point of such a circular arc $\C$.
If one of $M_1$ or $M_2$, say, $M_1 \in \C$, then $\C_1^M \subset \C$,
which yields that $\C_2^M = g_{21}(\C_1^M)$ is also a unit circular arc,
implying that $\C_1^M \cup \C_2^M$ belongs to the same unit circle $\hat \SS$.
Since this circle is invariant under a rotation around $O$,
we have $\hat \SS=\SS^1$, which contradicts our assumption
that $M,M_1,M_2 \in \inter \BB^2$.
Assume that $M_1, M_2 \notin \C$, and let $\C^1$ and $\C^2$ denote
$\C \cap \C_1^M$ and $\C \cap \C_2^M$, respectively.
Then $g_{21}(\C^2)$ is a unit circular arc in $\C_1^M$
whose length is equal to that of $\C^2$.
Thus, $g_{21}(\C^2)$ and $\C^1$ intersect in a unit circular arc,
which yields that $g_{21}(\C^2) \cup \C^1 = \C_1^M$ is a unit circular arc,
which leads to a contradiction in a similar way.

Let us say that a unit circular arc in $\bd \D_i$ is \emph{maximal},
if it is not a proper subset of another unit circular arc in $\bd \D_i$.
By Lemma~\ref{lem:Jordan}, $\bd \D_1$ contains finitely many,
say $m \geq 1$ maximal unit circular arcs of length $\alpha$,
one of which is $\SS_1$.
Thus, $\bd \D_3$ also contains $m$ maximal unit circular arcs of length $\alpha$.
By the previous paragraph, any of these arcs is contained in $\C_1^S \cup \C_1^M$
or in $\C_2^S \cup \C_2^M$.
Assume that all these arcs are contained in $\C_1^S$ or in $\C_2^S$.
Since $\C_1^S$, $\C_2^S$ and $\C_3^S$ are congruent,
we have that the total number of unit circular arcs of length $\alpha$ in $\C_i^S$
is equal to ${m}/{2}$. Thus, $\bd \D_1$ contains $m+1$ arcs,
which is a contradiction.

Finally, consider the case that some maximal unit circular arc $\SS_{\alpha}$
of length $\alpha$ in $\bd \D_3$ is not contained in $\C_1^S \cup \C_2^S$.
Since $\alpha > \alpha - \beta$, $M$ is not an interior point of $\SS_{\alpha}$,
but $M_1$ or $M_2$ is. Without loss of generality,
we may assume that $M_1$ is in the interior of $\SS_{\alpha}$.
This implies that $M$ is in the interior of
$g_{12}(\SS_{\alpha}) \subseteq \C_3^S \cup \C_2^M$
(similarly as Figure~\ref{fig:threeparts} shows).
Hence, $M$ is not an interior point of a unit circular arc in $\bd \D_1$,
which implies that $M_2$ is not an interior point of any unit circular arc
in $\bd \D_2$.
On the other hand, again by Lemma~\ref{lem:Jordan},
$\bd \D_3$ contains $k$ maximal unit circular arcs of length $\beta$
for some $k \geq 1$, one of which is $\SS_3$.
By our previous argument,
any of these arcs is contained in one of $\C_i^M$ or $\C_i^S$ for some $i \in \{ 1,2 \}$.
Let $k_M \geq 0$ and $k_S \geq 0$ denote the number of these arcs in $\C_1^M$ and $\C_1^S$, respectively.
Then $\C_1^M$ and $\C_1^S$ contain exactly $k_M$ and $k_S$ of these arcs, respectively.
From this it readily follows that $k=2k_M+2k_S+1$.
Furthermore, since $\bd \D_1$ also contains $k$ maximal unit circular arcs of length $\beta$,
we have $k=k_M+2k_S$. This yields that $k_M=-1$, which is a contradiction.

\goodbreak%
\noindent
\emph{\textbf{Case 3}:
 all of the arcs $\SS_1$, $g_{21}(\SS_2)$ and $g_{31}(\SS_3)$ coincide.}%
\\
In this case $g_{21}$ and $g_{31}$ are either reflections
about a line through $O$, or rotations around $O$.
In particular,
$O$ is a fixed point of both of them 
and thus it is the unique common point $M$ of all tiles.
For any $i \neq j$, let $\C_{ij} = \D_i \cap \D_j$.
If both $g_{12}$ and $g_{13}$ are rotations around $O$,
then the tiling is clearly rotationally generated.
Hence, assume that one of $g_{12}$ and $g_{13}$,
say $g_{12}$ is a reflection about a line $\ell$ through $O$.
Then $g_{12}(\C_{13} \cup \C_{12}) = \C_{12} \cup \C_{23}$
yields that $\C_{12}$ is a straight line segment in $\ell$,
which, by the congruence of the tiles implies also that $C_{ij}$
is a segment for all $i \neq j$.
Thus, also in this case the tiling is rotationally generated,
and the assertion follows.

\null\vskip-2.5\baselineskip\null

\section{Remarks and open problems}\label{sec:remarks}


First, we observe that the quantity $n(\K)$ can be similarly defined
for any $O$-symmetric convex body $\K$ in $\Re^d$ playing the role of $\BB^2$.
On the other hand, Theorem~\ref{thm:main} cannot be generalized
for any $O$-symmetric convex body even in the case $d=2$.
Indeed, taking a parallelogram and dissecting it into three congruent parallelograms
with two lines parallel to a pair of sides of the parallelogram shows
that there are $O$-symmetric plane convex bodies $\K$ with $n(\K)=3$.
However, it is easy to see that the following generalization
of Theorem~\ref{thm:main} holds.

\begin{thm}\label{thm:general}
If there is a monohedral tiling of an $O$-symmetric, strictly convex, smooth body $\K$ in $\Re^2$ with $k \leq 3$ topological discs,
then both $K$ and its tiling has a $k$-fold symmetry. In particular, for any $O$-symmetric, strictly convex plane body $\K$ of smooth boundary
 we have $n(\K) \geq 4$.
\end{thm}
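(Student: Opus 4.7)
The strategy is to mirror the proof of Theorem~\ref{thm:main} with $\K$ in place of $\BB^2$, substituting arguments specific to the unit disc (Euclidean circumcircles, the spindles $\ominus^r_{P,Q}$, and explicit constant-curvature computations) by ones that use only the $O$-symmetry, strict convexity, and smoothness of $\K$. The topological preliminaries (Lemmas~\ref{lem:topolofdissection}, \ref{lem:3tilestopology}, \ref{lem:Jordan}, together with the equidecomposability results of Lemma~\ref{lem:equirel} and Corollaries~\ref{cor:graphs} and~\ref{cor:cutcong}) carry over verbatim, as they depend only on $\K$ being a topological disc, and Lemma~\ref{lem:circumcircle} is already phrased for arbitrary non-overlapping congruent topological discs sharing a Euclidean circumcircle. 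The main preliminary that needs to be redone is Lemma~\ref{lem:only1arc}: I would replace the spindle argument by a combination of the turning-angle counting of Lemma~\ref{lem:allarc} and the fact that $\bd\K$ is a smooth strictly convex Jordan curve with a continuously varying unit tangent, which strongly restricts how congruent copies of a subarc of $\bd\K$ can lie on $\bd\K$.

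The case $k=1$ is trivial. For $k=2$, the boundary $\bd\K=\SS_1\cup\SS_2$ is partitioned into two congruent arcs with common endpoints $P,Q$, and the involution $g_{12}$ swaps them. Hence $g_{12}$ acts as a symmetry on $\bd\K$ and therefore on $\K$; since $\K$ is centrally symmetric, every symmetry of $\K$ fixes the unique center $O$, so $g_{12}$ must be either $-I$, a reflection in a line through $O$, or a rotation about $O$. A short case analysis, using Lemma~\ref{lem:topolofdissection} applied to the interior curve $\bd\mathcal D_1\cap\bd\mathcal D_2$ (which must be invariant under $g_{12}$) together with strict convexity to exclude non-antipodal endpoint configurations, should force $P=-Q$ and $g_{12}=-I$, yielding the desired $2$-fold rotational symmetry.

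For $k=3$ I would adopt the trichotomy from the proof of Theorem~\ref{thm:main}, based on how many of the arcs $\SS_1,g_{21}(\SS_2),g_{31}(\SS_3)$ coincide. When all three coincide, $g_{21}$ and $g_{31}$ are forced to be rotations around $O$ by $\pm2\pi/3$, giving the $3$-fold symmetry immediately. The hard part, and the main obstacle, is to rule out the other cases (especially Subcase~2.b), where the original proof exploited the constant curvature of $\SS^1$ through explicit unit circular arcs and the area identity $\pi/n=\area(\ominus^1_{P,Q})/2$. For general $\K$ these quantitative steps must be replaced by a rigidity statement to the effect that any decomposition of the smooth strictly convex curve $\bd\K$ into three congruent arcs with compatible endpoint matchings must arise from a genuine rotational symmetry of $\K$; making this substitution rigorous, without the explicit circular geometry available for $\BB^2$, is the central difficulty. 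Once the $k$-fold rotational symmetry is established, the corollary $n(\K)\geq 4$ is immediate: the rotation fixes $O$ and cyclically permutes the $k\leq 3$ tiles, which forces $O$ to lie on the boundary of every tile, and hence in every tile.
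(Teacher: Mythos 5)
First, a caveat: the paper itself gives no proof of Theorem~\ref{thm:general} --- it is introduced only with the remark that ``it is easy to see'' that it holds --- so your proposal can only be measured against the proof of Theorem~\ref{thm:main} and the implicit claim that it adapts. Your overall plan is exactly the intended one, and your inventory is accurate: Lemmas~\ref{lem:Jordan}, \ref{lem:topolofdissection}, \ref{lem:3tilestopology} and the equidecomposability machinery are purely topological and transfer verbatim, while Lemma~\ref{lem:only1arc} and Case~2 of the main proof are where the metric geometry of the disc enters.

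The difficulty is that your proposal stops exactly where the proof has to be done. You explicitly leave unproved (i) the analogue of Lemma~\ref{lem:only1arc}, whose disc proof uses spindles, circumcircles, the area identity $\area(\ominus^1_{P,Q})=(s-\sin s)/2$ and the algebraicity of $\sin(2\pi/k)$, none of which you replace with anything concrete, and (ii) the exclusion of Subcase~2.b, which you defer to an unspecified ``rigidity statement''. Even Case~3 is asserted rather than argued: you claim that when $\SS_1=g_{21}(\SS_2)=g_{31}(\SS_3)$ the maps $g_{21},g_{31}$ are ``forced'' to be rotations by $\pm2\pi/3$, but the fact that makes this automatic for the disc --- any isometry carrying one nondegenerate arc of $\SS^1$ onto another arc of $\SS^1$ is a symmetry of $\SS^1$, hence a rotation about $O$ or a reflection fixing $O$ --- has no a priori analogue for a general strictly convex smooth curve: an isometry matching two congruent subarcs of $\bd\K$ need not preserve $\bd\K$, and a partition of $\bd\K$ into three pairwise congruent arcs does not obviously force a $3$-fold symmetry of $\K$. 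Supplying this rigidity (presumably from the $O$-symmetry together with the tiling structure) is the actual content of the generalization, and it is missing; the same issue infects your $k=2$ argument, where ``$g_{12}$ acts as a symmetry on $\bd\K$'' is assumed rather than derived. On the positive side, part of what you fear is not really an obstacle: the convex/concave length bookkeeping in Subcase~2.b ($\alpha+x=\beta+2y$, $y=2x$) uses only that congruent arcs have equal length and that an arc shared by two tiles is convex for exactly one of them, so it survives with ``unit circular arc'' replaced by ``arc congruent to a subarc of $\bd\K$''. The final deduction $n(\K)\ge 4$ from the $k$-fold rotational symmetry is correct.
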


This raises the question what happens if smoothness or the strictness of the convexity
is dropped from the conditions.

Following \cite{Goncharov}, we generalize Question~\ref{ques:existence}
for balls in arbitrary dimensions.

\begin{ques}
 Are there monohedral tilings of the closed unit ball $\BB^d$
 such that the center of the ball is not contained in all of the tiles?
 More specifically, what are the values of $d$ for which it is possible?
\end{ques}

We also raise the following, related problem:

\begin{ques}
 If $\BB^2$ has a tiling with \emph{similar} copies of some topological disc $\D$,
 does it follow that the tiles are congruent?
 Does it follow that $\BB^2$ has a tiling with \emph{congruent} copies of $\D$?
 Do these properties hold under some additional assumption on the tiles,
 e.g. if they have piecewise analytic boundaries?
\end{ques}

We should finally mention the \emph{divisibility problem},
in which the topological conditions on the tiles are dropped:
A subset of $\mathbb R^d$ is \emph{$m$-divisible}
if it can be decomposed into $m\in\mathbb N$ mutually \emph{disjoint} congruent subsets.
It is proved that typical convex bodies are not divisible \cite{Richter},
but balls are not typical in this sense, and they are $m$-divisible for large values of $m$
if $d$ is divisible by three \cite{KissLaczkovich}
or $d$ is even \cite{KissSomlai}.

\begin{bibdiv}
\begin{biblist}


\bib{BLNP}{article}{
   author={Bezdek, K.},
   author={L\'angi, Z.},
   author={Nasz\'odi, M.},
	author={Papez, P.},
   title={Ball-polyhedra},
	journal={Discrete Comput. Geom.},
   volume={38},
   year={2007},
   number={2},
   pages={201--230},
}

\goodbreak
\bib{CroftFalconerGuy1994}{book}{
   author={Croft, H. T.},
   author={Falconer, K. J.},
   author={Guy, R. K.},
   title={Unsolved problems in geometry},
   series={Problem Books in Mathematics},
   publisher={Springer-Verlag, New York},
   date={1994},
   pages={xvi+198},
   isbn={0-387-97506-3},
}

\bib{FKV}{article}{
   author={Fodor, F.},
   author={Kurusa, \'{A}.},
   author={V\'{\i}gh, V.},
   title={Inequalities for hyperconvex sets},
   journal={Adv. Geom.},
   volume={16},
   year={2016},
   number={3},
   pages={337--348},
   url={https://doi.org/10.1515/advgeom-2016-0013},
}

\bib{Frederickson}{book}{%
    AUTHOR = {G. Frederickson},
     TITLE = {Dissections: Plane \& Fancy},
 PUBLISHER = {Cambridge University Press},
      YEAR = {1997},
}

\goodbreak

\bib{Goncharov}{article}{%
   AUTHOR={Goncharov, S. V.},
   TITLE={On covering a ball by congruent subsets in normed spaces},
   journal={arXiv},
   year={2017},
   URL={https://arxiv.org/abs/1708.01598},
}

\bib{HaddleyWorsley2016}{article}{%
   AUTHOR={Haddley, J. A.},
   AUTHOR={Worsley, S.},
   TITLE={Infinite families of monohedral disk tilings},
   journal={arXiv},
   year={2015},
   URL={https://arxiv.org/abs/1512.03794},
}

\bib{Kanelbelov2002}{article}{%
  author={Kanel-Belov, A. Ya.},
  title={Solution of Problem 1.5.},
  journal={Matem. Prosvesch. 3rd ser.},
  volume={6},
  editor={Tikhomirov, V. M.},
  isbn={5-94057-018-6},
  language={Russian},
  date={2002},
  pages={139--140},
}

\bib{KissLaczkovich}{article}{%
 author={G. Kiss {\rm and} M. Laczkovich},
 title={Decomposition of balls into congruent pieces},
 journal={Mathematika},
 volume={57},
 number={1},
 year={2011},
 pages={89--107},
 publisher={London Mathematical Society},
 url={https://doi.org/10.1112/S0025579310001658},
}

\bib{KissSomlai}{article}{%
 author={G. Kiss {\rm and} G. Somlai},
 title={Decomposition of ball in $\mathbb{R}^{d}$},
 journal={Mathematika},
 volume={62},
 number={2},
 year={2016},
 pages={378--405},
 publisher={London Mathematical Society},
 url={https://doi.org/10.1112/S0025579315000248},
}

\bib{KoMaL}{article}{%
  author={M. Sal\'at},
  title={Cover image},
  journal={K\"oz\'episkolai Matematikai \'es Fizikai Lapok},
  volume={51},
  issue={1--5},
  editor={V. Ol\'ah},
  issn={1215-9247},
  language={Hungarian},
  year={2001},
  url={http://db.komal.hu/scan/2001/01/B01011B.PS.png},
}

\bib{KurusaMF}{article}{
   author={Kurusa, \'{A}.},
   title={Can you see the bubbles in a foam?},
   journal={Acta Sci. Math. (Szeged)},
   volume={82},
   date={2016},
   number={3-4},
   pages={663--694},
   url={https://doi.org/10.14232/actasm-015-299-1},
}

\bib{MASS}{article}{%
 AUTHOR={MASS Program at Penn State},
 TITLE ={About our Logo},
 url={https://math.psu.edu/mass/content/about-our-logo},
}

\bib{MathOverflow}{article}{%
   AUTHOR={Math Overflow},
   TITLE={Is it possible to dissect a disk into congruent pieces, so that a neighborhood  of the origin is contained within a single piece?},
  URL={https://mathoverflow.net/questions/17313},
}

\bib{MoscowMOs2000-5}{collection}{
   title={Moscow Mathematical Olympiads, 2000--2005},
   series={MSRI Mathematical Circles Library},
   volume={7},
   editor={Fedorov, R.},
   editor={Belov, A.},
   editor={Kovaldzhi, A.},
   note={Partial translation of the 2006 Russian original; Translated by Vladimir Dubrovsky},
   publisher={Mathematical Sciences Research Institute, Berkeley, CA;
              American Mathematical Society, Providence, RI},
   date={2011},
   pages={viii+176},
   isbn={978-0-8218-6906-2},
   url={https://books.google.hu/books?id=HTR-AwAAQBAJ&pg=PA139&lpg=PA139\#v=onepage&q&f=false},
}

\bib{Richter}{article}{
   author={Richter, C.},
   title={Most convex bodies are isometrically indivisible},
   journal={J. Geom.},
   volume={89},
   year={2008},
   number={1-2},
   pages={130--137},
   url={https://doi.org/10.1007/s00022-008-2033-0},
}

\bib{Sagan}{book}{%
    AUTHOR = {Sagan, H.},
     TITLE = {Space-filling curves},
    SERIES = {Universitext},
 PUBLISHER = {Springer-Verlag, New York},
      YEAR = {1994},
     PAGES = {xvi+193},
       URL = {https://doi.org/10.1007/978-1-4612-0871-6},
}

\bib{Schoenflies}{article}{
   author={Schoenflies, A.},
   title={Beitr\"age zur Theorie der Punktmengen III},
   journal={Mathematische Annalen},
   volume={62},
   year={1906},
   number={2},
   pages={286--328},
   URL = {https://doi.org/10.1007/BF01449982},
}

\goodbreak
\bib{Tangsupphathawat2014}{article}{
    AUTHOR = {Tangsupphathawat, Pinthira},
     TITLE = {Algebraic trigonometric values at rational multipliers of $\pi$},
   JOURNAL = {Acta Comm. Univ. Tartuensis Math.},
  FJOURNAL = {Acta et Commentationes Universitatis Tartuensis de Mathematica},
    VOLUME = {18},
      YEAR = {2004},
     PAGES = {9--18},
      ISSN = {1787-5021},
  url={https://doi.org/10.12697/ACUTM.2014.18.02},
}



\end{biblist}
\end{bibdiv}
\vspace{-.5\baselineskip}

\end{document}